\newtheorem{theorem}{Theorem}
\newtheorem{remark}{Remark}
\newtheorem{corollary}[theorem]{Corollary}
\newtheorem{definition}[theorem]{Definition}
\newtheorem{lemma}[theorem]{Lemma}
\newtheorem{proposition}[theorem]{Proposition}
\newtheorem{thm}{Theorem}[section]
\numberwithin{equation}{section} \numberwithin{theorem}{section}
\def\R{\mathbb R}
\def\Z{\mathbb Z}
\def\Z{\mathbb Z}
\def\lz{\lambda\cdot\mathbb Z^2}
\begin{document}

\title[Global wellposedness for the 3D KPII equation]{Global well-posedness and scattering for small data for the 3-D Kadomtsev-Petviashvili-II equation}

\author{Herbert Koch}

\address{Mathematisches Institut \\ Unversit\"at Bonn \\ Endenicher Allee 60 \\ 53115 Bonn \\ Germany}
\email{koch@math.uni-bonn.de}
\thanks{H.K has been partially supported by the DFG through CRC 1060}

\author{Junfeng Li}
\address{Laboratory of Math and
Complex Systems \\ Ministry of Education \\ School of Mathematical
Sciences \\ Beijing Normal University \\ Beijing 100875 \\  P. R. China}
\email{lijunfeng@bnu.edu.cn}
\thanks{J.L. has been partially supported by the Humboldt foundation,
  NSF of China (Grant No. 11171026), the Fundamental Research Funds for the Central Universities (NO. 2014KJJCA10).}

\begin{abstract} We  study  global well-posedness
  for the Kadomtsev-Petviashvili II equation in three space dimensions with
  small initial data.   The crucial points are new bilinear estimates and
 the definition  of the function spaces. As by-product we obtain that all solutions to
  small initial data scatter as $t \to \pm \infty$.
\end{abstract}

\subjclass{35Q53, 37K40}

\maketitle

{\bf{ Keywords}}: Kadomtsev-Petviashvili II, Galilean transform, Bilinear
estimate, nonlinear waves.

\vspace{0.25cm}

\section{Introduction and main results}

In this paper, we study the Cauchy problem for the  3-dimensional   Kadomtsev-Petviashvili II (KP-II) equation
\begin{align}\label{eq:1.1}
\left\{
\begin{aligned}
\partial_x\left(\partial_t u+\partial_x^3 u+\partial_x(u^2)\right)+\triangle_{y}u=& 0    \hspace{1.5cm}  & (t,x,y)\in \mathbb{R} \times \mathbb{R}\times \mathbb{R}^2  \\
u(0,x,y)=& u_0(x,y)  &  (x,y)\in \mathbb{R}\times \mathbb{R}^2.
\end{aligned}
\right.
\end{align}

The Kadomtsev-Petviashvili (KP) equations describe nonlinear wave interactions of almost
parallel waves. They come with at least four
different flavors: The KP-II equation for which the line soliton is
supposed to be stable, the KP-I equation with localized solitons, and
the modified KP-I and KP-II equations with cubic nonlinearities.

The KP-II equation is invariant under

\begin{enumerate}
\item Translations in $x,y$ and $t$.
\item Scaling: $\lambda^2 u( \lambda x, \lambda^2 y, \lambda^3 t) $ is  a solution if $u$ satisfies the KP-II equation \eqref{eq:1.1}.
\item Galilean transform: Let $c \in \mathbb{R}^2$. Then $ u(t, x- c
  \cdot y -|c|^2 t, y +2ct ) $ is a solution if $u$ satisfies
  \eqref{eq:1.1}. On the Fourier side the transform is $\hat u( \tau- |c|^2
  \xi-2c\cdot \eta, \xi, \eta +c\xi  ) $ where $\tau$ is the Fourier
  variable of $t$, $\xi\in \mathbb{R}$ is the Fourier variable of $x$
  and $\eta$ the one of $y$.
\item Isometries of the $y$ plane.
\item  Simultaneous reflections of $x$, $t$ and $u$.
\end{enumerate}

 The Galilean invariance is often a consequence of the
rotational symmetry of full systems for which certain solutions are
asymptotically described by a KP equation. The interest in the KP
equations comes from the expectation that they describe waves in a
certain asymptotic regime for a large class of problems, for which one
does not even have to formulate a full model, similar to the role of
the nonlinear Schr\"odinger equation in nonlinear optics.

The Galilean symmetry group is noncompact, in contrast to the
orthogonal group $O(n)$ and it seems that with this noncompactness the
difficulty increases with the dimension, in contrast to what is true
for many wave and Schr\"odinger equations. It would be interesting to
see whether the stronger decay of the linear equation compared to the 2d problem can be used to prove global existence for small Schwartz functions.

We search for spaces of initial data and solutions which reflect the
symmetries. Given $\lambda \in \mathbb{R} \backslash \{0\}$, we define
the Fourier projection $u_\lambda$ (we denote the Fourier transform by
$\mathcal{F}$ resp. $\hat{ }$) by
\begin{equation} \label{Fourierprojection}  \hat u_\lambda (\tau, \xi,\eta) = \left\{ \begin{array}{rl} \hat u(\tau, \xi,\eta) & \text{ if } \lambda \le|\xi| < 2\lambda \\ 0 & \text{otherwise.} \end{array} \right.   \end{equation}
We will always choose $\lambda$ to be a power of $2$.
For fixed $\lambda $, we partition the set
$\{ (\xi,\eta) \in \mathbb{R}\times \mathbb{R}^2: \lambda\le |\xi| < 2\lambda\}$ into sets $\Gamma_{\lambda,k}$ for $ k \in \lambda\cdot\mathbb{Z}^2$ defined by
\begin{equation} \label{eq:Gamma}  \Gamma_{\lambda,k} = \left\{ (\xi,\eta):   \lambda\le |\xi| < 2\lambda, \left| \frac{\eta}{\xi} - k \right|_{\infty}  \le \frac{\lambda}{2} \right\} \end{equation}
 where $ | a |_{\infty} = \max \{ |a_1|,|a_2|\}$.
This decomposition is shown below.

\begin{center}
\begin{tikzpicture}[xscale=4.2,yscale=.0625]
\draw(0,0)--(2.2,0) node[anchor=south]{$\xi$};
\draw(0,-64)--(0,64) node[anchor=west]{$\eta$};
\draw(1,-32)--(2,-64)--(2,64)--(1,32)--(1,-32)--(1,0)--(2,0);
\draw(0.5,-28)--(1,-56)--(1,56)--(0.5,28)--(0.5,-28)--(0.5,-12)--(1,-24)--(1,-8)--(.5,-4)--(.5,4)--(1,8)--(1,24)--(.5,12)--(.5,20)--(1,40)--(1,-40)--(.5,-20);
{
\draw(.25,-17)--(.5,-34)--(.5,34)--(.25,17)--(.25,-9)--(.5,-18)--(.5,-14)--(.25,-7)--(.25,-5)--(.5,-10)--(.5,-6)--(.25,-3)--(.25,-1)--(.5,-2)--(.5,2)
--(.25,1)--(.25,3)--(.5,6)--(.5,10)--(.25,5)--(.25,-7)--(.5,-14)--
(.5,14)--(.25,7)--(.25,9)--(.5,18)--(.5,-26)--(.25,-13)--(.25,-11)--(.5,-22)--(.5,22)--(.25,11)--(.25,13)--(.5,26)--(.5,30)--(.25,15)--(.25,-19)--(.5,-38)--(.5,38)--(.25,19)--(.25,-15)--(.5,-30);
\draw(.25,-19)--(.25,19);}
{ \clip (.124,-64) rectangle (.251,64);
\draw  (0,0)--(.5,43)--(.5,-43)--(0,0)--(.5,41)--(.5,-41)--(0,0)--(.5,39)
--(.5,-39)--(0,0)--(.5,37)--(.5,-37)--(0,0)--(.5,35)--(.5,-35)--(0,0)--(.5,33)
--(.5,-33)--(0,0)--(.5,31)--(.5,-31)--(0,0)--(.5,29)--(.5,-29)--(0,0)--(.5,27)
--(.5,-27)--(0,0)--(.5,25)--(.5,-25)--(0,0)--(.5,23)--(.5,-23)--(0,0)--(.5,21)
--(.5,-21)--(0,0)--(.5,19)--(.5,-19)--(0,0)--(.5,17)--(.5,-17)--(0,0)--(.5,15)
--(.5,-15)--(0,0)--(.5,13)--(.5,-13)--(0,0)--(.5,11)--(.5,-11)--(0,0)--(.5,9)
--(.5,-9)--(0,0)--(.5,7)--(.5,-7)--(0,0)--(.5,5)--(.5,-5)--(0,0)--(.5,3)
--(.5,-3)--(0,0)--(.5,-1)--(.5,1)--(0,0);
\draw (.125,10.75)--(.25,21.5)--(.25,-21.5)--(.125,-10.75)--(.125,10.75);
}
\end{tikzpicture}
\end{center}

\bigskip

For $1\leq q< \infty$, $1\leq p< \infty$,  a tempered distribution $f$ is said to be in $l^ql^pL^2$ if it is in the closure of $C^\infty_0$ with respect to the norm
\[\Vert f\Vert_{l^ql^pL^2}:=\left\{\sum_{\lambda\in 2^{\mathbb Z}}\lambda^{\frac{q}{2}}\left(\sum_{k\in\lambda\cdot\mathbb Z^2}\Vert f_{\Gamma_{\lambda,k}}\Vert^p_{L^2}\right)^\frac{q}{p}\right\}^{\frac{1}{q}}<\infty.\]
The case $p,q=\infty$ require the standard  modification. Here and in the sequel $f_{\Gamma_{\lambda,k}}$ denotes the Fourier projection.

We base our construction of the solution space on the space $V^2_{KP}$ of functions of bounded
2 variation $V^2$ adapted to the three dimensional KP-II equation. This
function space will be introduced in more detail in section \ref{bilinear}.
The solution space is defined as
\[
\|u\|_{l^q l^pV_{KP}^2}= \left(\sum_{\lambda\in 2^{\Z}}  \Big(\lambda^{\frac12} \sum_{k\in\lz}\|u_{\Gamma_{\lambda,k}}\|^p_{V^2_{KP}(\Gamma_{\lambda,k})}\Big)^{\frac{q}{p}}\right)^{\frac1q}<+\infty.
\]
We need also
 the homogeneous Fourier restriction space $\dot{X}^{0,b}$ for $|b|\le 1$
 which is defined by
\[
\|u_1\|_{\dot{X}^{0,b}}=\Vert |\partial_t-\partial_x^3+
\partial_x^{-1} \Delta_y|^b
u_1\Vert_{L^2}:=\||\tau-\omega(\xi,\eta)|^b\hat{u}_1\|_{L^2}<+\infty
\]
for tempered distributions supported in $[0,\infty) \times \R \times \R^2$.
Here $\omega(\xi,\eta)=\xi^3-\frac{|\eta|^2}{\xi}$ is the dispersion
function associated to KP-II equation. We define
\[
\|u\|_{l^q \dot{X}^{0,b}}= \Vert\lambda^2 u_{\lambda}(\lambda x,\lambda^2 y,\lambda^3 t)\Vert_{l^q_{\lambda}\dot{X}^{0,b}}=\left(\sum_{\lambda\in 2^{\mathbb{Z}}}\lambda^{(2-3b)q}
\Vert u_\lambda\Vert^q_{\dot{X}^{0,b}}\right)^{\frac1q}.\]
Here $l^p_\lambda$ denotes the $l^p$ norm with respect to the summation over
$\lambda \in 2^{\mathbb{Z}}$.
Finally we define the function space for the fixed point map by
\[
 \Vert u \Vert_X =\|u\|_{l^q l^p V^2_{KP} }+\|u\|_{l^q  \dot{X}^{0,b}}<\infty.
\]
Since $\sup_{t}  \Vert u(t)  \Vert_{L^2} \le \Vert u \Vert_{V^2_{KP}}$ (see \cite{KoBi})
one has
$ \sup_t \Vert u(t) \Vert_{l^q l^pL^2} \le \Vert u \Vert_X$.
It will be clear from the construction that we obtain  solutions in
$ u \in C([0, \infty); l^ql^pL^2),\,\,\text{for}\,\, 1\leq q<\infty, 1<p<2$.
We are ready  to state  our main results.

\begin{theorem}\label{wellposed} For $1\leq q<\infty$, $1<p<2$, there exists an $0<\varepsilon$ such that  if $u_0\in l^ql^pL^2$ satisfies
\[\Vert u_0 \Vert_{l^ql^pL^2}\leq\varepsilon\]
then there exist a unique global solution $w$ to \eqref{eq:1.1}
\[w=S(t)u_0+u\]
with $u\in X\subset C(\mathbb R, l^ql^pL^2).$  It satisfies
\begin{equation}\label{eq:1.2} \Vert u \Vert_X \le c  \Vert u_0  \Vert_{l^ql^pL^2}^2. \end{equation}
Here $S(t)u_0$ is the solution to the homogeneous problem defined by the Fourier transform (see \eqref{eq:2.1} in Section \ref{Strichartz} ).  The flow map
$$\Phi:B_{\varepsilon}\mapsto X: u_{0}\mapsto u \in X$$ is analytic.
Here the symbol $B_{\varepsilon}$ denotes the ball of radius $\varepsilon$
in $l^ql^pL^2$.
\end{theorem}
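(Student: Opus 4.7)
The plan is to recast \eqref{eq:1.1} as the integral equation
\[
w(t)=S(t)u_0-\int_0^t S(t-s)\,\partial_x(w(s)^2)\,ds,\qquad u=w-S(t)u_0,
\]
and run a Picard iteration for $u$ in the Banach space $X$. By standard arguments this reduces to three ingredients: (i) a homogeneous linear estimate $\|S(t)u_0\|_X\lesssim\|u_0\|_{l^ql^pL^2}$, (ii) an inhomogeneous Duhamel estimate transferring control of a forcing term $f$ into $X$-control of its Duhamel integral, and (iii) the bilinear estimate
\[
\Bigl\|\int_0^t S(t-s)\,\partial_x(u\,v)(s)\,ds\Bigr\|_X\lesssim \|u\|_X\|v\|_X.
\]
Once (i)–(iii) are in place, the map $u\mapsto -\int_0^t S(t-s)\partial_x((S(s)u_0+u)^2)ds$ is a contraction on a ball of radius $\sim\varepsilon^2$ in $X$ for $\|u_0\|_{l^ql^pL^2}\le\varepsilon$, yielding existence, uniqueness, the quantitative bound \eqref{eq:1.2}, and analyticity of the data-to-solution map as a power series in $u_0$.

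For (i), the Fourier projection $S(t)u_0$ is supported on the characteristic surface $\tau=\omega(\xi,\eta)$, so the homogeneous $\dot{X}^{0,b}$-component vanishes (or, more precisely, is handled by the extension convention used to define $\dot X^{0,b}$ on a half-line), and the $l^ql^p V^2_{KP}$-component is controlled by $\|u_0\|_{l^ql^p L^2}$ via the isometry between $L^2$-initial data and free-solution norms in $V^2_{KP}$ noted in the excerpt. For (ii), one uses the $U^2_{KP}$–$V^2_{KP}$ duality framework recalled in Section \ref{bilinear}: if the forcing $f$ lies in the atomic predual $DU^2_{KP}$ (or a suitable $l^{q'}l^{p'}$-version thereof), then the Duhamel integral belongs to $l^ql^p V^2_{KP}$, and the $\dot{X}^{0,b}$-component is directly bounded by $\||\tau-\omega|^{b-1}\hat f\|_{L^2}$.

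The main obstacle, and the heart of the proof, is the bilinear estimate (iii). One localizes $u$ and $v$ to tiles $\Gamma_{\lambda_1,k_1}$, $\Gamma_{\lambda_2,k_2}$ using the decomposition \eqref{eq:Gamma}. The key geometric fact is that the Galilean-invariant tiling is essentially preserved under convolution: if $(\xi_i,\eta_i)\in\Gamma_{\lambda_i,k_i}$, the sum $(\xi_1+\xi_2,\eta_1+\eta_2)$ lies in a bounded union of $\Gamma_{\lambda_3,k_3}$'s whose centers $k_3$ are determined (modulo $O(1)$ shifts) by $k_1,k_2$ and the size of $\lambda_3=\max\{\lambda_1,\lambda_2\}$ or $\min$, depending on the interaction regime. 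Combined with the resonance identity for $\omega(\xi,\eta)=\xi^3-|\eta|^2/\xi$, which controls $|\tau_1+\tau_2-\omega(\xi_1+\xi_2,\eta_1+\eta_2)|$ from below in terms of the tile sizes, one proves dyadic bilinear estimates in each of the high$\times$high$\to$low, high$\times$low$\to$high, and high$\times$high$\to$high regimes, using Strichartz-type $L^4$ estimates on individual $\Gamma_{\lambda,k}$-blocks together with the off-resonance gain from $\dot X^{0,b}$. The remaining step is the dyadic summation in $\lambda_i$ and the $\ell^p$-summation in $k_i$, done by Cauchy--Schwarz and Schur's test, where the choice $1<p<2$ ensures that the appropriate $\ell^{p'}$-sums are summable.

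Given (i)–(iii), the contraction argument is routine. Since $u\in V^2_{KP}$ for every tile and $V^2_{KP}$-functions admit limits at $\pm\infty$, rewriting $u(t)=S(t)u_0+u(t)$ one concludes $S(-t)w(t)\to w_\pm$ in $l^ql^pL^2$ as $t\to\pm\infty$, i.e.\ scattering. Uniqueness and analyticity follow by standard properties of the Picard iteration.
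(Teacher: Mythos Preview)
Your outline has a genuine gap at step (i): the claim $\Vert S(t)u_0\Vert_X\lesssim\Vert u_0\Vert_{l^ql^pL^2}$ is \emph{false} for the space $X$ used here. The $\dot X^{0,b}$-component of the $X$-norm is taken with $b>\frac12$ (in fact $b>\frac56$), and the truncated free evolution $\tilde u(t)=\chi_{[0,\infty)}S(t)u_0$ does \emph{not} belong to $\dot X^{0,b}$ for any such $b$ unless $u_0=0$. Your parenthetical (``handled by the extension convention'') is exactly where the argument breaks: on the Fourier side the sharp cutoff at $t=0$ convolves the surface measure with a $1/\tau$ tail, and no half-line convention restores membership in $\dot X^{0,b}$ for $b>\frac12$. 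The paper confronts this explicitly (this is ``issue iii)'' in Section~\ref{sketch}): instead of placing $\tilde u$ in $X$, it introduces the auxiliary norm $\Vert v\Vert_Y=\big\Vert\lambda^{1/2}\Vert v_\lambda\Vert_{l^pU^1_{KP}}\big\Vert_{l^q_\lambda}$, notes $\Vert\tilde u\Vert_Y\le\Vert u_0\Vert_{l^ql^pL^2}$, and proves the two additional bilinear estimates (Proposition~\ref{initial})
\[
\left\Vert\int_0^t S(t-s)\partial_x(uv)\,ds\right\Vert_X\lesssim \Vert u\Vert_Y\Vert v\Vert_Y,\qquad
\left\Vert\int_0^t S(t-s)\partial_x(uv)\,ds\right\Vert_X\lesssim \Vert u\Vert_X\Vert v\Vert_Y,
\]
so that the contraction runs on $u=w-\tilde u\in X$ while $\tilde u$ stays in $Y$.

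A second, related gap is in your sketch of (iii). The paper devotes Subsection~3.1 to precisely the argument you describe --- $L^4$ Strichartz on $\Gamma_{\lambda,k}$-blocks, the resonance identity, high-modulation gains, Cauchy--Schwarz summation --- and calls it ``a simple proof with three flaws'': the dyadic sums in $\lambda,\mu$ do \emph{not} close at that level, and one cannot pass from $U^2_{KP}$ to $V^2_{KP}$ without loss. What rescues the estimate is the new angular bilinear refinement \eqref{th:2.1b} of Theorem~\ref{th:2.1}, encoded in the bilinear operators $T_L$ of Subsection~\ref{bilinear} and the off-diagonal decay in Proposition~\ref{pro:3.1}. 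Your proposal does not invoke this mechanism, and without it the $l^p$-summation (for the very range $1<p<2$ you need) fails.
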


Scattering is an immediate consequence.
\begin{corollary}\label{scattering}[Scattering]  Under the assumption of Theorem  \ref{wellposed}  for  $u_0\in B_\varepsilon$
there exists
$u_{\pm}\in l^ql^pL^2 $ such that
\[
u(t)-S(t)u_{\pm} \rightarrow 0\text{\, in\, } l^ql^pL^2  \text{\,as\,} \, t\rightarrow\pm\infty.
\]

The  wave operators are the inverses of the maps
\[V_\pm :  B_\varepsilon\ni u_0 \rightarrow u_{\pm} \in   l^ql^pL^2. \]
They are analytic diffeomorphisms to their range if $\varepsilon $ is sufficiently small.
\end{corollary}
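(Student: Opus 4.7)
The plan is to read off scattering directly from the $V^2_{KP}$-structure of the solution constructed in Theorem \ref{wellposed}, and then obtain the diffeomorphism statement from the analytic inverse function theorem. Writing $w(t)=S(t)u_0+u(t)$ with $u\in X$, the first step is to observe that for each dyadic block $(\lambda,k)$ the function $t\mapsto S(-t)u_{\Gamma_{\lambda,k}}(t)$ is a $V^2$ function adapted to the KP-II flow, and hence, by the standard endpoint property of $V^2$, admits one-sided $L^2$-limits
\[
g^\pm_{\lambda,k}:=\lim_{t\to\pm\infty}S(-t)u_{\Gamma_{\lambda,k}}(t),\qquad \|g^\pm_{\lambda,k}\|_{L^2}\le \|u_{\Gamma_{\lambda,k}}\|_{V^2_{KP}(\Gamma_{\lambda,k})}.
\]
Assembling the blocks gives a distribution $g^\pm$, and I set $u_\pm:=u_0+g^\pm$.

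The second step upgrades the blockwise convergence to convergence in $l^ql^pL^2$. From the uniform blockwise bound
\[
\|S(-t)u_{\Gamma_{\lambda,k}}(t)-g^\pm_{\lambda,k}\|_{L^2}\le 2\|u_{\Gamma_{\lambda,k}}\|_{V^2_{KP}(\Gamma_{\lambda,k})}
\]
together with the finiteness of $\|u\|_{l^ql^pV^2_{KP}}$ guaranteed by $u\in X$, dominated convergence on the $l^ql^p$ sums yields $S(-t)u(t)\to g^\pm$ in $l^ql^pL^2$ as $t\to\pm\infty$. Since $S(t)$ is a Fourier multiplier of modulus one and therefore acts isometrically on each block, this is equivalent to $w(t)-S(t)u_\pm\to 0$ in $l^ql^pL^2$, which is the scattering assertion. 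The same estimate also gives the quantitative bound $\|u_\pm-u_0\|_{l^ql^pL^2}\le c\|u\|_{l^ql^pV^2_{KP}}\le c\|u\|_X\le c'\|u_0\|_{l^ql^pL^2}^2$ by \eqref{eq:1.2}.

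For the statement about $V_\pm$, I would write $V_\pm(u_0)=u_0+T_\pm(u_0)$, where $T_\pm(u_0):=g^\pm$ is the composition of the analytic solution map $u_0\mapsto u\in X$ from Theorem \ref{wellposed} with the bounded linear evaluation $u\mapsto\lim_{t\to\pm\infty}S(-t)u(t)$ from $X$ to $l^ql^pL^2$ (linear and bounded by the second step). Hence $V_\pm$ is analytic on $B_\varepsilon$, its derivative at $u_0=0$ is the identity, and $\|T_\pm(u_0)\|_{l^ql^pL^2}\le c\|u_0\|_{l^ql^pL^2}^2$. For $\varepsilon$ small enough the analytic inverse function theorem therefore provides an analytic local inverse whose image contains $B_\varepsilon$; this inverse is by definition the wave operator, yielding the claimed diffeomorphism property. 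The step I expect to demand the most care is the second one: guaranteeing that the blockwise $V^2$-limits really assemble into an element of $l^ql^pL^2$ and that the convergence takes place in that weighted norm rather than only block by block. This is what the dominated convergence argument above delivers, and it depends on having $V^2_{KP}$-control on each frequency block, a feature already encoded into the definition of $X$.
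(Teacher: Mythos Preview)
Your proof is correct and follows essentially the same route as the paper: blockwise existence of $\lim_{t\to\pm\infty}S(-t)u_{\Gamma_{\lambda,k}}(t)$ from the $V^2_{KP}$ structure, passage to the full $l^ql^pL^2$ limit, and then the analytic inverse function theorem applied to $u_0\mapsto u_\pm$ whose derivative at $0$ is the identity. The paper's argument is terse and leaves implicit precisely the dominated convergence step you spelled out; your version makes explicit why the blockwise limits assemble in $l^ql^pL^2$, which is the only point requiring any care.
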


\begin{proof}  It is an important property of the spaces $V^2_{KP}$ that for
$ v \in V^2_{KP}$ the limit
\[ \lim_{t\to \infty}   S(-t) v(t) \]
exists. If $v \in X$ then $\lim_{t \to \infty} S(-t) v_{\Gamma_{\lambda,k}}$ exist. But then also
\[ \lim_{t\to\infty} S(-t) v(t) \]
exists in $l^ql^pL^2$. Since $ u_0 \to u(t) \in X$ is analytic
also the map $ u_0 \to \lim_{t\to \infty} S(-t) u(t) $ is analytic as
a function of $u_0$. Its derivative at $u_0=0$ is the identity, and
hence the map is invertible in a neighborhood of $u_0=0$.
\end{proof}

Theorem \ref{wellposed} is almost sharp.  For $2<p<\infty$, problem \eqref{eq:1.1} is ill-posed in the sense that the map $l^ql^pL^2 \ni u_0 \to u(t) \in l^ql^pL^2$ cannot be twice differentiable at $0$.
\begin{theorem}\label{illposed}
Let $1\leq q\leq\infty$, $2<p<\infty$. Suppose there exists  $T >0$
and $\varepsilon>0$
such that \eqref{eq:1.1} admits a unique solution defined on the interval $ [-T,T] $ for initial data in ball of radius $\varepsilon$ and center $0$ in $l^ql^pL^2$. Then  the flow map
\[F_t :u_0 \rightarrow u(t)\]
for \eqref{eq:1.1} is not twice differentiable at $u_0=0$  as a map from $l^ql^pL^2$ to itself.
\end{theorem}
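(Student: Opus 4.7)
The plan is to follow the Bourgain/Molinet--Saut--Tzvetkov approach: twice-differentiability of $F_t$ at $u_0 = 0$ forces continuity of the first Picard iterate as a bilinear form, which is then refuted by an explicit Fourier-localized counterexample sensitive to the $l^p$-spreading/concentration mechanism for $p > 2$.

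\emph{Step 1: bilinear reduction.} Suppose $F_t$ is twice Fr\'echet differentiable at $u_0 = 0$. Expanding the Duhamel identity
\[
u(t) = S(t) u_0 - \int_0^t S(t - s) \partial_x (u^2)(s)\, ds
\]
to second order at $u_0 = 0$ identifies the quadratic Taylor coefficient with the first Picard iterate
\[
B(u_0)(t) := -2 \int_0^t S(t - s) \partial_x \bigl(S(s) u_0\bigr)^2\, ds,
\]
so that continuity of $D^2 F_t(0)$ is equivalent to
\begin{equation}\label{eq:ill:bil}
\|B(u_0)(t)\|_{l^q l^p L^2} \leq C\, \|u_0\|^2_{l^q l^p L^2}.
\end{equation}
It suffices to produce a sequence of data violating \eqref{eq:ill:bil}. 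On the Fourier side
\[
\widehat{B(u_0)}(t, \xi, \eta) = -2 i\xi\, e^{-it\omega(\xi,\eta)} \int \hat u_0(\xi_1, \eta_1)\, \hat u_0(\xi - \xi_1, \eta - \eta_1)\, \frac{e^{-it\Phi} - 1}{-i\Phi}\, d\xi_1\, d\eta_1,
\]
with KP-II resonance function
\[
\Phi = -3\xi_1(\xi - \xi_1)\xi - \frac{|\eta_1(\xi - \xi_1) - (\eta - \eta_1)\xi_1|^2}{\xi_1(\xi - \xi_1)\xi}.
\]

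\emph{Step 2: counterexample and ratio.} For each large integer $N$, take $u_0 = u_0^{(N)}$ of the form $\hat u_0^{(N)} = \sum_{j=1}^N \hat \varphi(\cdot - P_j)$, where $\hat\varphi$ is a small Schwartz bump of unit $L^2$-mass supported in a tiny cube and $\{P_j\}_{j=1}^N$ are $N$ distinct frequency points with $|\xi_j| \sim \lambda$, each in its own cell $\Gamma_{\lambda, k_j}$. Distinctness of input cells gives $\|u_0^{(N)}\|_{l^q l^p L^2} \sim \lambda^{1/2} N^{1/p}$. On the output, $\widehat{B(u_0^{(N)})}(t)$ decomposes into $N^2$ pair contributions localized near $P_i + P_j$ and weighted by $m(\Phi_{ij}) = (e^{-it\Phi_{ij}} - 1)/(-i\Phi_{ij})$. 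Arrange the $P_j$'s so that (i) $|\Phi_{ij}|$ is uniformly bounded on the supports, so $|m(\Phi_{ij})|$ is bounded below; and (ii) the sumset $\{P_i + P_j\}$ has controlled additive structure. Distinct pair-sums are then separated by much more than the bump diameter, and by Pythagoras for disjoint Fourier supports the $L^2$-mass of $\widehat{B(u_0^{(N)})}$ on each output cell is $\sim \sqrt{(\text{number of pair-sums landing there})}\cdot |m(\Phi)|$. Summing in $l^p L^2$ over the output cells yields
\[
\|B(u_0^{(N)})(t)\|_{l^q l^p L^2} \gtrsim \lambda^{1/2} N^{\alpha(p)}
\]
with $\alpha(p) > 2/p$ for $p > 2$. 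Consequently $\|B(u_0^{(N)})\|/\|u_0^{(N)}\|^2 \gtrsim N^{\alpha(p) - 2/p}$ diverges as $N \to \infty$ for such $p$, contradicting \eqref{eq:ill:bil}.

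\emph{Main obstacle.} The central difficulty is the explicit construction of the configuration $\{P_j\}$ realizing (i)--(ii) together with distinctness of input cells. The resonance identity shows $|\Phi| \geq 3|\xi_1\xi_2(\xi_1 + \xi_2)|$, so $\Phi$ has a fixed nonzero lower bound on each dyadic $\xi$-scale; the upper bound needed in (i) forces the $\eta_j/\xi_j$ to stay in a bounded region, which is in tension with occupying many distinct cells. The required flexibility comes from the Galilean invariance of \eqref{eq:1.1}, which permutes the cell partition, so a base cell-supported atom can be Galilean-translated along a grid of parameters to fill $N$ distinct cells without enlarging the magnitude of $\Phi_{ij}$. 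With this choice the additive geometry of $\{P_i + P_j\}$ is inherited from the Galilean grid, the bookkeeping of pair-sums per output cell can be made precise, and the sharp threshold $p = 2$ emerges from the balance between the input $N^{1/p}$-factor and the output's concentration penalty.
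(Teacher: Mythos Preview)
Your Step~1 is correct and matches the paper's reduction: twice-differentiability of $F_t$ at $0$ forces a bound $\|B(u_0)(t)\|_{l^ql^pL^2}\lesssim\|u_0\|_{l^ql^pL^2}^2$ on the second Picard iterate, and the task is to violate it.

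Step~2, however, has a genuine gap. The resolution you propose for your ``main obstacle'' does not work: the resonance function $\Phi$ is Galilean invariant. A Galilean shift by $c$ sends $(\xi,\eta)\mapsto(\xi,\eta+c\xi)$, hence $\eta/\xi\mapsto\eta/\xi+c$, while $\Phi$ depends on the frequencies only through $\xi_1,\xi_2$ and the \emph{difference} $\eta_1/\xi_1-\eta_2/\xi_2$, which is unchanged. So Galilean-translating a single atom by various $c_j$ to fill $N$ distinct cells $\Gamma_{\lambda,k_j}$ forces $|\eta_i/\xi_i-\eta_j/\xi_j|\gtrsim\lambda|k_i-k_j|_\infty$, and the second term of $\Phi$ then contributes at least of order $\lambda^3$ for any pair in distinct cells, destroying the lower bound on $|m(\Phi_{ij})|$. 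The tension you correctly identified between ``many distinct cells at frequency $\lambda$'' and ``bounded $\Phi$'' is real and cannot be broken at a single dyadic $\xi$-scale.

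The paper's construction avoids this by a high--low interaction rather than a sum of many high-frequency atoms. One takes $\hat\phi=\hat\phi_1+\hat\phi_2$ with $\hat\phi_1$ supported at $|\xi|\sim\mu\ll 1$ and $\hat\phi_2$ at $|\xi|\sim\lambda\gg 1$, both with the \emph{same} $\eta$-support of side $\sim\lambda\mu$. Then $\eta_1/\xi_1\sim\lambda$ while $\eta_2/\xi_2\sim\mu$, and the resonance for the cross term satisfies $|\Phi|\sim\mu\lambda^2$, which is made $O(1)$ by choosing $\mu\sim\lambda^{-2}$. The $p$-dependence enters because $\phi_1$, at frequency $\mu$, is spread over $\sim(\lambda/\mu)^2$ cells of the $\mu$-partition (the cell spacing in $\eta/\xi$ is $\mu$, but $\eta_1/\xi_1$ ranges over an interval of length $\sim\lambda$); for $p>2$ this makes $\|\phi_1\|_{l^ql^pL^2}$ small, carrying a normalization factor $(\lambda/\mu)^{-2/p}$. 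The output lands at frequency $\sim\lambda$ in a single cell, and one computes $\|F_3\|_{l^ql^pL^2}\gtrsim\lambda^{3-6/p}\to\infty$ for $p>2$. The point is that the spreading over many cells must happen on the \emph{low}-frequency factor, where the cell size is small enough to accommodate it without blowing up $\Phi$.
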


We complement the results by studying the relation of the new function spaces
to test functions and distributions.
\begin{theorem}\label{discription}
For any $1\leq q\leq \infty$, we have
\begin{enumerate}
\item If $p<2$ then $l^q l^p L^2$ embeds
continuously into the space of distributions.
\item If $p\ge 2$ and $q>1$ there is a
sequence of Schwartz functions $\phi_j$ converging to $0$ in $l^q
l^p L^2 $, which  does not converge in the sense  of distributions.
\item  If $p\leq \frac43 $, and $ \phi$ is Schwartz function in
$l^q l^p L^2$ then for all $y \in \mathbb{R}^2$ we have
\[ \int \phi(x,y) dx = 0. \]
\item   The Schwartz functions are contained in $l^q l^p L^2$ if $\frac43 <p<\infty$.
\end{enumerate}

\end{theorem}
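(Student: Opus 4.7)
The plan is to reduce all four assertions to one elementary estimate on the $L^2$-size of a Schwartz function over the boxes $\Gamma_{\lambda,\lambda j}$. First I would record the basic geometric facts: on $\Gamma_{\lambda,\lambda j}$ one has $|\xi|\approx\lambda$, $|\eta|\sim\lambda^2|j|$, and $|\Gamma_{\lambda,\lambda j}|\approx\lambda^5$. Then for any $\psi\in\mathcal{S}$, the Schwartz decay $|\hat\psi(\xi,\eta)|\lesssim_N(1+|\xi|+|\eta|)^{-N}$ combined with a Riemann-sum evaluation (treating $j\in\Z^2$ as sampling $\lambda^2 j\in\R^2$ at spacing $\lambda^2$) will give
\[ \bigl(\sum_{j\in\Z^2}\|\psi_{\Gamma_{\lambda,\lambda j}}\|_{L^2}^{r}\bigr)^{1/r}\lesssim \lambda^{5/2-4/r}\quad(\lambda\le 1), \]
with rapid decay in $\lambda$ for $\lambda\gg 1$, and a matching lower bound whenever $(0,\eta_0)\in\supp\hat\psi$ for some $\eta_0$.

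Parts (3) and (4) will then be immediate. For (4) I insert the upper bound with $r=p$, multiply by $\lambda^{1/2}$, and sum: $\|\phi\|_{l^ql^pL^2}^q\lesssim\sum_\lambda\lambda^{q(3-4/p)}+(\text{rapidly decaying tail})$, finite exactly when $3-4/p>0$, i.e.\ $p>4/3$. For (3) I apply the contrapositive lower bound: if $\hat\phi(0,\eta_0)\ne 0$ then $|\hat\phi|\gtrsim 1$ on a neighborhood containing $\sim\lambda^{-4}$ full boxes $\Gamma_{\lambda,\lambda j}$ for each small $\lambda$, which forces $\|\phi\|_{l^ql^pL^2}^q\gtrsim\sum_{\lambda\to 0}\lambda^{q(3-4/p)}=+\infty$ when $p\le 4/3$, contradicting $\phi\in l^ql^pL^2$.

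For (1) I would use duality. Starting from $\phi\in C_0^\infty$ and $\psi\in\mathcal{S}$, Plancherel together with two applications of H\"older --- in $j$ with exponents $p,p'$ and in $\lambda$ with exponents $q,q'$ after the splitting $1=\lambda^{1/2}\cdot\lambda^{-1/2}$ --- yields
\[ |\langle\phi,\psi\rangle|\lesssim \|\phi\|_{l^ql^pL^2}\Bigl(\sum_\lambda\lambda^{-q'/2}\bigl(\sum_j\|\psi_{\Gamma_{\lambda,\lambda j}}\|_{L^2}^{p'}\bigr)^{q'/p'}\Bigr)^{1/q'}. \]
Inserting the upper bound with $r=p'$ reduces the right-hand factor to $(\sum_\lambda\lambda^{q'(2-4/p')})^{1/q'}$, finite for $\lambda\to 0$ precisely when $p<2$. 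Density of $C_0^\infty$ then extends $\langle f,\cdot\rangle$ continuously to all $f\in l^ql^pL^2$, giving the embedding into $\mathcal{S}'$.

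The main obstacle is (2). I will construct explicit $\phi_N\in\mathcal{S}$ that concentrate at a nonzero distributional limit while vanishing in norm. Fix a smooth bump $\varphi$ supported in $[1,2]$ and $\psi_0\in\mathcal{S}(\R^2)$, and set $\hat g_n(\xi,\eta)=\varphi(2^n\xi)\psi_0(\eta)$, so each $g_n$ is Schwartz with Fourier support at scale $\lambda_n=2^{-n}$. A direct application of the estimate from the first paragraph gives $\|g_n\|_{l^ql^pL^2}\approx 2^{-n(3-4/p)}$, while by Plancherel $\langle g_n,\rho\rangle\approx 2^{-n}\int\psi_0(\eta)\overline{\hat\rho(0,\eta)}\,d\eta$ for any $\rho\in\mathcal{S}$. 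Taking $\phi_N=\sum_{n=N}^{2N}(2^n/n)g_n\in\mathcal{S}$, the disjointness of the $\hat g_n$ in $\lambda$ makes norms add in $l^q$:
\[ \|\phi_N\|_{l^ql^pL^2}^q\approx\sum_{n=N}^{2N}\frac{2^{nq(4/p-2)}}{n^q},\qquad \langle\phi_N,\rho\rangle\approx C(\rho)\sum_{n=N}^{2N}\frac{1}{n}\longrightarrow C(\rho)\log 2. \]
Any $\rho$ with $C(\rho)\ne 0$ then witnesses non-convergence of $\phi_N$ to $0$ in $\mathcal{S}'$. For $p>2$ the geometric factor $2^{nq(4/p-2)}$ drives $\|\phi_N\|\to 0$ for any $q\ge 1$; at the borderline $p=2$ that factor disappears and $\|\phi_N\|^q\approx\sum_{n=N}^{2N}n^{-q}\to 0$ only because $q>1$, which is precisely where and why the hypothesis $q>1$ enters.
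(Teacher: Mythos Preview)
Your argument is correct and, for parts (i), (iii), (iv), essentially identical to the paper's: both rely on the same box-size estimate $\bigl(\sum_j\|\psi_{\Gamma_{\lambda,\lambda j}}\|_{L^2}^{r}\bigr)^{1/r}\lesssim \lambda^{5/2-4/r}$ for small $\lambda$ (the paper states this with slightly different exponents, but the content is the same), then duality for (i) and matching upper/lower bounds for (iii), (iv).

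For part (ii) you take a genuinely different route. The paper first reduces to the endpoint $p=2$ via the embedding $l^ql^2L^2\subset l^ql^pL^2$, then identifies $l^ql^2L^2$ with $L^2(\R^2_y;\dot B^{1/2}_{2,q}(\R_x))$ and builds a one-dimensional counterexample $\phi_\mu=|\ln\mu|^{-1/q}\sum_{\mu^2\le\lambda\le\mu}f_\lambda$ in $\dot B^{1/2}_{2,q}$ with $\|\phi_\mu\|\sim 1$ and $\langle\psi,\phi_\mu\rangle\sim|\ln\mu|^{1-1/q}\to\infty$. Your construction is a direct three-dimensional one, with the advantage that it treats all $p\ge 2$ uniformly and makes the role of the condition $q>1$ completely transparent at the borderline $p=2$. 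The paper's reduction is shorter once the Besov identification is granted, but your version is more self-contained.

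One small point: your sequence $\phi_N$ actually \emph{does} converge in $\mathcal{S}'$, to a nonzero limit $T$ with $\langle T,\rho\rangle=C(\rho)\log 2$. This already shows that $l^ql^pL^2$ does not embed continuously into $\mathcal{S}'$, which is the substantive content, but if you want the sequence literally to diverge in the sense of distributions (as the statement is phrased), replace the weight $2^n/n$ by $2^n/n^\alpha$ with $\alpha\in(1/q,1)$; then $\langle\phi_N,\rho\rangle\sim C(\rho)N^{1-\alpha}\to\infty$ while $\|\phi_N\|^q\sim N^{1-\alpha q}\to 0$ at $p=2$.
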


\begin{remark} For  $l^1l^2L^2=L^2(\R^2;B^{\frac{1}{2}}_{2,1})$ and  $l^2l^2L^2=\dot H^{\frac12,0}$ (see the definition \eqref{norms} below)
we do not know whether the flow map is smooth or not.
\end{remark}

It is worthwhile to compare our results to the 2-D KP II initial data
problem, which is much better understood.  It has the same symmetries
- up to obvious changes - as the three dimensional problem.
A scaling critical and Galilean invariant space is $\dot{H}^{-\frac12,0}$
defined by the norm
\begin{equation} \label{norms}  \Vert u_0 \Vert_{\dot{H}^{s,\sigma }} = \Vert |\xi|^{-1/2} <\eta>^\sigma \hat u_0\Vert_{L^2}.
\end{equation}
   In \cite{Bourgain3}, Bourgain settled the global well-posedness of
   the two dimensional version of \eqref{eq:1.1} in
   $L^2(\mathbb{R}^2)$. The assertion was then extended by Takaoka and
   Tzvetkov \cite{TkTz} (see also Isaza and Mej\'ia \cite{IsMe}) from
   $L^2(\mathbb{R}^2)$ to $H^{s_1,s_2}$ with $s_1>-\frac{1}{3},\,
   s_2\geq 0$. In \cite{Takaoka}, Takaoka obtained  local
   well-posedness for $s_1>-\frac{1}{2},\,s_2=0$ under an additional
   assumption on the low frequencies which was later removed by Hadac in
   \cite{Hadac1}. Hadac, Herr and the first author \cite{HaHeKo}
   studied  the two dimensional KP-II equation in the critical case $s_1=-\frac12,s_2=0$.  They
   obtained global well-posedness and scattering result in the
   homogeneous Sobolev space $\dot{H}^{-1/2,0}(\mathbb R^{2})$ with
   small initial data. A local well posedness result in $H^{-1/2,0}(\mathbb
   R^{2})$ was also obtained in \cite{HaHeKo}.  Some recent results on the KP-II equation can be found in \cite{KleSa}.

   Much less is known  for  KP II in three dimensional spaces.  Tzvetkov
   \cite{Tzvetkov} obtained  local well-posedness in $
   H^s(\mathbb{R}^3)$ with the additional condition
   $\partial_x^{-1}u\in H^{s}(\mathbb R^3)$ for $s>\frac32$. Here
   $H^s(\mathbb R^3)$ denotes the isotropic Sobolev space.
   Isaza, L\'opez and Mej\'ia \cite{IsLoMe} constructed unique local solutions
    in  Sobolev space $H^{s,r}(\mathbb R^3)$ defined by the norm
\[
\|f\|_{H^{s,r}(\mathbb R^3)}:=\|<\xi>^s<\zeta>^r\hat{f}(\zeta)\|_{L^2_{\zeta}}                \]
for $s,r \in \R$.  Hadac \cite{Hadac2} in his Ph.D thesis extended the
local well-posedness result to almost all the subcritical cases. He
obtained local well posed for \eqref{eq:1.1} in $Y_{s,r}(\mathbb R^3)$
for $s>\frac12, r>0$. To our best knowledge our result is the first
result for initial data in a scaling invariant space, and the first
scattering result for the three dimensional problem. Also the bilinear
estimates (Proposition \ref{th:2.1}) accounting for dispersion in $y$ seem to be new.

In the  3-D setting using the vertical direction (i.e. dispersion in the $y$ variable) is much
more important than in the two dimensional problem.  This can be see
from the Strichartz estimates in Theorem \ref{th:2.1} in Section
\ref{bilinear}. In particular the bilinear $L^4$ estimate by itself
seems not to suffice to close the iteration argument, and we need
several nontrivial modifications. In particular we use bilinear
estimates which give us a gain making  use of  the dispersion in $y$
direction.  We hope and think that these modifications and the
constructions are of interest beyond this particular problem at hand.
The 3D-KP II equation may be considered as a problem where the quadratic
nonlinearity satisfies a null condition which exactly balances
the bilinear estimates and the gain from high modulation, where we are not allowed to loose anything on the $L^2$ level.

The outline of this paper is following. In Section \ref{strichartz}
we prove the Strichartz estimates for the linear equations and a new
crucial and fundamental bilinear estimate, Theorem \ref{th:2.1}.  In
Section \ref{sketch} we give the proofs of our main results.
We first sketch an incorrect heuristic proof to show how far one gets using
simple bilinear estimates and high modulation, for $q=1$ and $p=2$.
A number of estimates is tight in this situation and we have not been able to
close the argument for those function spaces. In the remainder of this section
we sharpen the bilinear estimates and complete the proof of the main theorem.
In Section \ref{ill} we complete the paper by a proof of Theorem \ref{illposed}
and \ref{discription}.


We use the standard notation $A\lesssim B$ to mean that there exists
constant $C>1$ such $A\leq C B$. Constants $C$ may differ from line to
line and depend on some obvious indices in the context but not on $A$
and $B$. $A\sim B$ means $\frac{1}{C} B\leq A\leq C B$.  Similarly we
denote $A\ll B$ for $A\leq\frac{1}{C} B$ for some $C>0$. The $s$ dimensional Hausdorff measure is denoted by $\mathcal{H}^s$ and its restriction to a set $S$
by $\mathcal{H}^s_S$.

\section{Strichartz estimates and bilinear refinements}
\label{strichartz}

\subsection{Strichartz estimate}\label{Strichartz}
 The linear equation
\[  u_t + u_{xxx} +\partial_x^{-1} u_{yy} = 0 \]
defines a unitary group $S(t)$ on $L^2$ by
\begin{equation} \label{eq:2.1}
 \mathcal{F} (S(t) u_0) =   e^{it (\xi^3-|\eta|^2/\xi)} \hat u_0.
\end{equation}

Given $u_0 $ the solution $u(t) = S(t) u_0$ satisfies the Strichartz estimates
of the next lemma. We denote by $|D_x|^s$ the Fourier multiplier
$|\xi|^s$, $\xi $ being as always the Fourier variable of $x$.

\begin{lemma} \label{le:2.1}Suppose that $2\le p\le \infty$ and
\begin{equation} \label{eq:2.2} \frac2p+\frac3q = \frac32. \end{equation}
 Then the following estimate holds for all $u_0 \in \mathcal{S}$
\[ \Vert   u \Vert_{L^p_t L^q_x} \lesssim   \Vert  |D_x|^{\frac1{3p}}    u_0 \Vert_{L^2}. \]
If $2\le q <\infty $
\begin{equation} \label{eq:2.3}\frac1p+\frac1q = \frac12 \end{equation}
then
\[ \Vert u \Vert_{L^p_t L^q_x} \lesssim \Vert |D_x|^{\frac2p} u_0  \Vert_{L^2}. \]
\end{lemma}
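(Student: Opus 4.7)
The plan is to apply the classical Strichartz machinery: a pointwise dispersive estimate for the frequency-localized propagator, the $TT^*$ framework combined with Hardy-Littlewood-Sobolev in time, and finally a Littlewood-Paley square function summation over the dyadic frequency $\lambda$.

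For the first estimate, the essential input is the frequency-localized dispersive bound
\[
\|S(t) P_\lambda u_0\|_{L^\infty_{x,y}} \lesssim \lambda^{-3/2} |t|^{-3/2} \|P_\lambda u_0\|_{L^1_{x,y}}.
\]
I would prove this at $\lambda = 1$ by writing the kernel as an oscillatory integral with phase $\xi x + \eta \cdot y + t\xi^3 - t|\eta|^2/\xi$, performing the two-dimensional Gaussian integral in $\eta$ exactly (which yields amplitude of size $|\xi|/|t|$ together with the quadratic phase $e^{i\xi|y|^2/(4t)}$), and then applying Van der Corput's lemma with $k=2$ to the residual one-dimensional cubic integral in $\xi$. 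On the support $|\xi|\sim 1$ the second derivative $\phi''(\xi) = 6t\xi$ has size $|t|$ and $\phi'$ is monotone on each half-line $\xi>0$ and $\xi<0$, so Van der Corput delivers a uniform $|t|^{-1/2}$ factor; combining with the $|\xi|/|t|$ amplitude from the $\eta$-integral gives $|t|^{-3/2}$. The anisotropic scaling identity $K_t^{(\lambda)}(x,y) = \lambda^3 K_{\lambda^3 t}^{(1)}(\lambda x, \lambda^2 y)$ then furnishes the extra $\lambda^{-3/2}$ factor.

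Interpolating this dispersive bound with energy conservation and running $TT^*$ reduces the Strichartz inequality to a temporal Hardy-Littlewood-Sobolev convolution whose exponent condition $\tfrac{3}{2}(1-\tfrac{2}{q}) = 1 - \tfrac{2}{p}$ rearranges exactly to the stated admissibility line $\tfrac{2}{p}+\tfrac{3}{q}=\tfrac{3}{2}$. A Littlewood-Paley square function argument (legitimate since $p,q\ge 2$) then sums the dyadic pieces into the inhomogeneous Sobolev norm $\||D_x|^{1/(3p)}u_0\|_{L^2}$. For the second estimate on the Airy line $\tfrac{1}{p}+\tfrac{1}{q}=\tfrac{1}{2}$, I would use only the $x$-dispersion: after the Galilean transform renders the transverse dispersion into a unitary action on $L^2_\eta$ at each fixed $\xi$, the problem reduces fiberwise to the classical one-dimensional Kenig-Ponce-Vega Airy Strichartz estimate with its standard $|D_x|^{2/p}$ smoothing loss, and Minkowski integrates in $y$.

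The main technical difficulty is the Van der Corput step: because the phase $\omega(\xi,\eta) = \xi^3 - |\eta|^2/\xi$ degenerates as $\xi \to 0$, the uniform $|t|^{-1/2}$ gain in the $\xi$-integral is only available after dyadic localization in $\xi$. This is exactly why a Sobolev-type loss in the $x$-frequency appears on the right-hand side of both estimates rather than a purely scale-invariant $L^2$ bound.
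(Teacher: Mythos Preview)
Your argument for the first estimate is essentially the paper's: $\eta$-Gaussian integral, then van der Corput in $\xi$, then $TT^*$/Hardy--Littlewood--Sobolev, then Littlewood--Paley summation. One arithmetic slip: the kernel scaling is $K_t^{(\lambda)}(x,y)=\lambda^5 K_{\lambda^3 t}^{(1)}(\lambda x,\lambda^2 y)$ (the Jacobian of $(\xi,\eta)\mapsto(\lambda\xi,\lambda^2\eta)$ on $\R\times\R^2$ is $\lambda^5$, not $\lambda^3$), so the frequency-localized dispersive bound is $\|S(t)P_\lambda u_0\|_{L^\infty}\lesssim \lambda^{+1/2}|t|^{-3/2}\|u_0\|_{L^1}$, not $\lambda^{-3/2}$. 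This is harmless for the strategy: scaling from $\lambda=1$ still produces exactly the loss $\lambda^{1/(3p)}$ and hence $\Vert|D_x|^{1/(3p)}u_0\Vert_{L^2}$. The paper also invokes Keel--Tao for the endpoint $p=2,\,q=6$, which you do not mention.

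Your plan for the second estimate has a genuine gap. The Galilean transform does not strip off the transverse dispersion: the multiplier $e^{-it|\eta|^2/\xi}$ is unitary on $L^2_\eta$ for fixed $\xi$ (and on $L^2_\xi$ for fixed $\eta$), but neither unitarity survives inside an $L^p_tL^q_{x,y}$ norm because the $t$-dependence ties the ``unitary'' to the outer integration. Even if one could decouple, the one-dimensional Airy Strichartz line is $\tfrac{2}{p}+\tfrac{1}{q}=\tfrac{1}{2}$ (from $|t|^{-1/2}$ decay), not $\tfrac{1}{p}+\tfrac{1}{q}=\tfrac{1}{2}$, so the reduction would land on the wrong admissibility relation.

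The paper's argument for \eqref{eq:2.3} is in fact the opposite of ``use only the $x$-dispersion'': it uses only the $y$-dispersion. After the $\eta$-Gaussian yields the amplitude $|\xi|/|t|$, one does \emph{not} apply van der Corput in $\xi$; on the support $|\xi|\sim 1$ one simply bounds
\[
\Big|\int_{1\le|\xi|\le 2}\xi\, e^{i(x+|y|^2/(4t))\xi+it\xi^3}\,d\xi\Big|\le C,
\]
so the localized kernel obeys $|K_t^{(1)}|\lesssim |t|^{-1}$. This is exactly the 2D Schr\"odinger decay rate, and the same $TT^*$/HLS machinery then gives Strichartz on the Schr\"odinger line $\tfrac{1}{p}+\tfrac{1}{q}=\tfrac{1}{2}$; rescaling produces the factor $\lambda^{2/p}$.
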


\begin{proof} We only sketch the proof. By a Littlewood Paley decomposition
(see \eqref{Fourierprojection})
and H\"older's inequality the estimate follows from
\[ \Vert u_{1} \Vert_{L^p_t L^q_x} \le c \Vert u_{1}(0) \Vert_{L^2} \]
for Strichartz pairs $(p,q)$  which in turn is a consequence of the calculation
of the complex Gaussian (as oscillatory integral)
\[\frac1{ 2\pi}  \int_{\R^2}  e^{i y\cdot \eta - i t \eta^2/\xi+it \xi^3} d\eta
= \frac{\xi}{4ti}  e^{i\frac{\xi|y|^2}{4t}+ i t\xi^3 }.
\]
By stationary phase and the lemma of van der Corput we obtain
\[   \left| \int \frac{\xi}{|\xi|} |\xi|^{1/2}  e^{i(x+ \frac{|y|^2}{4t}) \xi + i t\xi^3} d\xi    \right| \le C
|t|^{-\frac12}  \]
which we write as
\[ \Vert D_x^{\frac12} \mathcal{F}^{-1} e^{it (\xi^3-\eta^2/\xi)}
\Vert_{sup} \le C |t|^{-\frac32}. \] By complex interpolation, the
Hardy-Littlewood-Sobolev resp. weak Young inequality and a $T^*T$
argument \eqref{eq:2.2} follows.  The endpoint $p=2$ and $q=6$ follows
from \cite{KeelTao}.

The estimate
\[   \left| \int_{1\le |\xi|\le 2} \xi  e^{i(x+ \frac{|y|^2}{4t}) \xi + i t\xi^3} d\xi    \right| \le C   \]
is trivial. It leads to the second estimate \eqref{eq:2.3}
by the same standard arguments.
\end{proof}

It is remarkable that there is so much  flexibility in the choice of $p$
and $q$. This is true for the Schr\"odinger group, but there it comes
from a trivial combination of (sharp) Strichartz estimates with
Sobolev embedding. Here the situation is different due to the
unbounded $y$ direction.

\subsection{Bilinear estimates}

There is an important special case of \eqref{eq:2.3}:
\begin{equation} \label{eq:2.4}
 \Vert  u \Vert_{L^4(\R^4)} \le c \Vert |D_x|^{\frac12} u_0 \Vert_{L^2(\R^3)}.
\end{equation}

The proof of the main theorem relies crucially on the following bilinear refinements. We denote by $u_{<\mu}$ the Fourier projection to all $\xi$ frequencies
less in absolute value than $\mu$, by $u_{>\lambda}$  the Fourier projection to $\xi$ frequencies with absolute value $>\lambda$
 and by  $u_{\mu, \Gamma}$ the Fourier projection to
\[   \Big\{ (\xi,\eta):   \mu< |\xi| \le 2\mu,   \frac{\eta}\xi \in  \mu \Gamma \Big\}.  \]
Let $|\Gamma|$ denote the Lebesgue measure of $\Gamma$. With this notation  the following variant or sharpening of the bilinear estimate is true.

\begin{thm}\label{th:2.1} Let $0< \mu , \lambda$. Then
\begin{equation}\label{th:2.1a}
 \Vert u_{<\mu}   v_{>\lambda} \Vert_{L^2} \le c \mu  \Vert u_0 \Vert_{L^2} \Vert v_0 \Vert_{L^2},
\end{equation}
and, if $\mu \le \lambda$, if $\Gamma \subset \R^2$ is measurable, and if
either
\begin{itemize}
\item  $\mu \le \lambda/8$ or
\item $\lambda/8 < \mu \le \lambda$ and $\Gamma \subset  B_\lambda  (0) $ and the support of the Fourier transform of $v_\lambda$
is disjoint from $\R \times \R \times B_{10 \lambda^2}(0)$
\end{itemize}
 then
 \begin{equation}\label{th:2.1b}
\begin{split}
 \left\Vert  \int_{\R\times \R^2}  \Big(\lambda+ \left|\frac{\eta_1}{\xi_1} - \frac{\eta-\eta_1}{\xi-\xi_1} \right| \Big)   \hat u_{\mu,\Gamma}(t,\xi_1,\eta_1)  \hat v_\lambda(t,\xi-\xi_1,\eta-\eta_1) d\xi_1 d \eta_1  \right\Vert_{L^2}
&\\ & \hspace{-7cm}
\lesssim \mu |\Gamma|^{\frac12}
\Vert u_{0,\mu,\Gamma} \Vert_{L^2} \Vert v_{0,\lambda} \Vert_{L^2}.
\end{split}
\end{equation}

 \end{thm}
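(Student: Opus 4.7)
The plan is to reduce both estimates by Plancherel to uniform (in $(\tau,\xi,\eta)$) bounds on a weighted level-set integral, then exploit the null-form structure of KP-II and an explicit change of variables to evaluate them.

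\emph{Reduction.} For free solutions $u = S(t) u_0$ and $v = S(t) v_0$, taking the Fourier transform in $t$ of the bilinear expression gives
\[
\mathcal{F}_t[F](\tau,\xi,\eta) = \int \hat u_0(\xi_1,\eta_1)\hat v_0(\xi-\xi_1,\eta-\eta_1)(\lambda+|\Delta|)\,\delta(\tau-\Omega)\,d\xi_1 d\eta_1,
\]
where $\Omega = \omega(\xi_1,\eta_1)+\omega(\xi-\xi_1,\eta-\eta_1)$ and $\Delta = \eta_1/\xi_1-(\eta-\eta_1)/(\xi-\xi_1)$. Applying Cauchy--Schwarz against the measure $\delta(\tau-\Omega)\,d\xi_1 d\eta_1$, followed by Fubini, both claims reduce to
\[
\sup_{(\tau,\xi,\eta)}\int_{\text{supports}} w(\xi_1,\eta_1)\,\delta(\tau-\Omega)\,d\xi_1 d\eta_1 \le C,
\]
with $(w,C) = (1, c\mu^2)$ for \eqref{th:2.1a} and $(w,C) = ((\lambda+|\Delta|)^2, c\mu^2|\Gamma|)$ for \eqref{th:2.1b}.

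\emph{Null form and change of variables.} A direct computation yields the KP-II null-form identity
\[
\Omega - \omega(\xi,\eta) = -\frac{\xi_1\xi_2}{\xi}\bigl(3\xi^2+|\Delta|^2\bigr), \qquad \xi_2 = \xi-\xi_1,
\]
which is sign-definite. Performing the affine substitution $\eta_1\mapsto \Delta = (\eta_1\xi-\eta\xi_1)/(\xi_1\xi_2)$, with Jacobian $(\xi_1\xi_2/\xi)^2$, followed by polar coordinates on $\Delta$, converts $\delta(\tau-\Omega)$ into arc-length measure on the circle $|\Delta| = R_*(\xi_1)$, where $R_*^2 = \xi(\omega-\tau)/(\xi_1\xi_2)-3\xi^2$. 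For \eqref{th:2.1a} this gives $\int\delta(\tau-\Omega)\,d\eta_1 = \pi\xi_1\xi_2/\xi \lesssim \mu$, and integration over $\xi_1\in[\mu,2\mu]$ finishes the proof at once.

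\emph{The weighted estimate.} For \eqref{th:2.1b}, the angular localization $\eta_1/\xi_1\in\mu\Gamma$ translates into $\Delta\in S_0 := (\mu\xi/\xi_2)\Gamma - \eta/\xi_2$, a region of area $\lesssim \mu^2|\Gamma|$. Since $(\lambda+|\Delta|)^2 = (\lambda+R_*)^2$ is constant on each admissible circle, I swap the order of integration using the polar bijection $(\xi_1,\theta)\leftrightarrow \Delta$ with Jacobian $R_*|R_*'|$; computing $|R_*'|$ from the null form and using the identity $|\xi-2\xi_1| = |\xi_1 - \xi_2| = \sqrt{\xi^2-4a\xi}$ (with $a = \xi_1\xi_2/\xi$), the weighted integrand collapses to
\[
\frac{\xi\, a^2\,(\lambda+R)^2}{(3\xi^2+R^2)\,|\xi_1-\xi_2|},
\]
which under either hypothesis regime is $O(1)$ pointwise on $S_0$, so that integrating over $S_0$ yields the advertised bound.

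\emph{Main obstacle.} The delicate part is verifying the $O(1)$ pointwise bound on this reduced integrand. When $\mu\le\lambda/8$ one has $\xi,\xi_2 \sim \lambda$ and $|\xi_1-\xi_2| \sim \lambda$ uniformly, and a direct check succeeds. When $\lambda/8<\mu\le\lambda$, however, $|\xi_1-\xi_2|$ can become much smaller than $\lambda$, producing a Jacobian singularity; the hypotheses $\Gamma\subset B_\lambda(0)$ and the support condition on $\hat v_\lambda$ are tailored precisely to force $|\Delta|\gtrsim\lambda$, which both keeps the ratio $(\lambda+R)^2/(3\xi^2+R^2)$ bounded and prevents the Jacobian singular set from intersecting the effective support of $\Delta$ in a damaging way. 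Handling this second regime carefully is the core of the proof.
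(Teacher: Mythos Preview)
Your Cauchy--Schwarz reduction and your treatment of \eqref{th:2.1a} coincide with the paper's. For \eqref{th:2.1b} the paper makes a different and decisive choice of variable: instead of passing to $\Delta$, it sets $\rho=\eta_1/\xi_1$ (the slope of the low-frequency factor), so that the constraint $\rho\in\mu\Gamma$ is \emph{independent of $\xi_1$} and the outer $\rho$-integration produces the factor $|\Gamma|$ for free. For fixed $\rho$ one computes directly
\[
\Bigl|\partial_{\xi_1}\Omega\Bigr|_{\rho\ \text{fixed}}
\;=\;\bigl|\,3\xi_2^{2}-3\xi_1^{2}+|\Delta|^{2}\,\bigr|,
\]
and the role of the two alternative hypotheses is precisely to make this quantity comparable to $(\lambda+|\Delta|)^{2}$: in the first case because $3\xi_2^{2}-3\xi_1^{2}\sim\lambda^{2}>0$, in the second because the support conditions force $|\Delta|\gtrsim\lambda$, which then dominates. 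The weight therefore cancels exactly against $1/|\partial_{\xi_1}\Omega|$ at each of the at most four roots of $g_\rho=\tau$, leaving only $|\xi_1|^{2}\lesssim\mu^{2}$.

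Your swap $(\xi_1,\theta)\leftrightarrow\Delta$ brings in instead the Jacobian $R_*|R_*'|$, and since
\[
2R_*R_*'\;=\;-\,\frac{(3\xi^{2}+R_*^{2})(\xi_2-\xi_1)}{\xi_1\xi_2},
\]
this vanishes at $\xi_1=\xi_2=\xi/2$, a point that lies in the integration range whenever $\lambda/8<\mu\le\lambda$. Your assertion that the reduced integrand is $O(1)$ pointwise on $S_0$ therefore fails in the second regime: the hypotheses do force $|\Delta|\gtrsim\lambda$, but the critical radius $R_*(\xi/2)$ depends on $\tau$ and can equally well be $\gtrsim\lambda$, so the singular circle $|\Delta|=R_*(\xi/2)$ need not miss $S_0$. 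The singularity is only of order $|R-R_{\mathrm{crit}}|^{-1/2}$ and hence integrable in two dimensions, so your route is probably salvageable---but not via a pointwise bound, and you would also have to account for the fact that $\xi_1\mapsto R_*(\xi_1)$ is generically two-to-one (your ``polar bijection'' is really a two-sheeted cover), and that your region $S_0$ itself varies with $\xi_1$ through $\xi_2$. The paper's choice of $\rho$ rather than $\Delta$ is exactly what makes the weight--Jacobian cancellation exact and nonsingular, and makes the $\Gamma$-factor appear without any of these complications.
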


\begin{remark} Here as always $u_{0,\mu,\Gamma}$ denotes the Fourier projection of the initial data.
\end{remark}

\begin{remark} The condition for the second inequality is needed for
a bound of a derivative from below at a single point in the argument in
\eqref{lower} below. If $\mu \sim \lambda$, $ \Gamma = B_\lambda(0)$ and
the Fourier support of $v_\lambda$ is contained in $\R \times \R \times B_{10 \lambda^2}(0)$ then
there is no gain compared to the Strichartz estimate \eqref{eq:2.4}.
\end{remark}

\begin{proof}

We consider solutions to the  dispersive equation
\begin{equation}\label{eq:phi}   i \partial_t u + \phi(D) u = 0\end{equation}
with $\phi(D)$ defined as Fourier multiplier with a smooth real
function $\phi$. Then the Fourier transform of a solution with initial
data $u_0$ is a complex measure supported on the characteristic set
$\{(\tau,\xi): \tau= \phi(\xi)\}$.  Here we denote all spatial Fourier
variables by $\xi$.   If $u$ is the solution to \eqref{eq:phi} with
initial data $u_0$ then (essentially using a regularization and the
coarea formula to make sense of the calculus of Dirac measures)
\[ \hat u =    \hat u_0(\xi) \delta_{\Phi} =      \sqrt{2\pi} (1+2|\nabla \phi|^2)^{-1/2}   \hat u_0(\xi)     d\mathcal{H}^{d}|_{\Sigma}   \]
where $\Sigma = \{(\tau, \xi): \tau = \phi(\xi)\}$ is the characteristic set,
and
\[ \Vert \hat u \Vert_{L^2(\delta_\Phi)}= (2\pi)^{-1/2} \Vert u_0 \Vert_{L^2}. \]

By the formula of Plancherel
bilinear estimates for dispersive equations are equivalent to  $L^2$ estimates
of  convolutions of such signed measures supported in such surfaces.
 By the Cauchy-Schwarz inequality and the theorem of Fubini, for non-negative bounded measurable functions $h$ and $l$,
\[
\begin{split}
\Vert fh*gl \Vert_{L^2(\R^d)}^2  \hspace{-1.5cm}& \\ = &
\int_{\R^{d}}
\left(\int_{\R^{d}}     f(x)(h(x) l(z-x))^{1/2}    g(z-x)  (h(x)  l(z-x))^{1/2}  dx \right)^2 dz \\
\le & \int_{\R^{d}}  \int_{\R^d} f^2(x)h(x) l(z-x) dx   \int_{\R^d}
g^2(y) h(z-y)l(y) dy \, dz \\
\le &   \int_{\R^{2d}} \left[ \int_{\R^d} h(z-y) l(z-x) dz \right]   f^2(x) h(x)  g^2(y) l(y) dx dy.
\end{split}
\]
Suppose that $U,V \subset \R^d$ are open,  $\Phi_{1} \in C^1(U)$, $\Phi_2 \in C^1(V)$ and  that the gradients $\nabla \Phi_i$ are nonzero where $\Phi_i$ vanishes. We define the Dirac measures $\delta_{\Phi_i}$
by approximation. The zero set of $\Phi_i$ is denoted by $\Sigma_i$.
The calculation above yields
\[ \Vert f\delta_{\Phi_1} * g\delta_{\Phi_2} \Vert_{L^2}
\le  C \Vert f \Vert_{L^2(\delta_{\Phi_1})} \Vert g \Vert_{L^2(\delta_{\Phi_2})} \]
where
\begin{equation}\label{eq:C}   C^2 = \sup_{x\in \Sigma_1, y \in \Sigma_2}   \int_{\R^d} \delta_{\Phi_2(z-x)}
\delta_{\Phi_1(z-y)} dz \end{equation}
which has again to be understood as limit through the approximation of the Dirac measures by smooth
functions.  By the coarea formula the integral can be rewritten. Let
\[ \Sigma_{x,y} = \{ z \in \R^d : z-x \in \Sigma_2, z-y \in \Sigma_1 \}
= (x+\Sigma_2) \cap (y+\Sigma_1).\]
With
\[  D= \left( \begin{matrix} d\Phi_1(z-x)  \\ d\Phi_2(z-y)  \end{matrix}  \right) \]
\[ J(z,x,y)  = \left( \det( D^T D) \right)^{1/2}, \]
we have
\begin{equation}\label{eq:2.5}   C^2 = \sup_{x,y}   \int_{\Sigma_{x,y}}  J(x,y,z) d\mathcal{H}^{d-2}(z).
\end{equation}
The case $ \Phi_i(\tau,\xi) = \tau-\phi(\xi)$, but with
$\Phi_1$ defined on $\R \times A $ and $\Phi_2$ on $\R \times B$
 is of particular interest.
Integrating  out $\tau$ \eqref{eq:C} simplifies to (with $n=d-1$)
\begin{equation}\label{c2}
C^2 = \sup_{\xi_1\in A,\xi_2\in B} \sup_{\tau}   \int_{\R^n}   \delta_{\phi(\xi-\xi_1)-\phi(\xi-\xi_2)-\tau} d\xi.
\end{equation}

The first case of interest is $U= \{(\tau,\xi,\eta):  |\xi|\le \mu\}$,
$V =    \{(\tau,\xi,\eta): \lambda \le |\xi| \}$ and
\[ \phi=\phi_1 = \phi_2 = \xi^3 -|\eta|^2/\xi. \]
To obtain the bilinear estimate \eqref{th:2.1a} we have to estimate the integrals in \eqref{c2}
by a constant times $\mu^2$. By the $L^4$ estimate \eqref{eq:2.4} we may assume that $\mu \le \lambda/2$ and  estimate the quantity in \eqref{eq:C}:
\begin{equation} \label{c2kp}
 C^2 = \sup_{\tau,\xi_1,\xi_2,\eta_1,\eta_2}
 \int_{\R^3,\xi-\xi_2 \in A, \xi-\xi_1\in B}
  \delta_{\phi(\xi-\xi_1,\eta-\eta_1)-\phi(\xi-\xi_2,\eta-\eta_2)-\tau}   d\eta   \,  d\xi.
\end{equation}
 The algebraic identity
\begin{equation}\label{eq:2.6}
\begin{split}
 \phi(\xi-\xi_1,\eta-\eta_1) - &  \Phi(\xi-\xi_2,\eta-\eta_2) +\Phi(\xi_1-\xi_2,\eta_1-\eta_2  )
\\ & + 3(\xi_1-\xi_2)(\xi-\xi_1)(\xi-\xi_2)   \\ = & (\xi_2-\xi_1) (\xi-\xi_1)(\xi-\xi_2)
\left( \frac{\left|\frac{\eta-\eta_1}{\xi-\xi_1} - \frac{\eta-\eta_2}{\xi-\xi_2}\right|}{|\xi_1-\xi_2|} \right)^2
\end{split}
\end{equation}
can be verified by  an easy  calculation. In particular, if we fix $\xi$
then either the $\eta$  integral is over the empty set, a point, or
it is an integral over a circle, in which case by \eqref{eq:2.6}
(it suffices to consider the coefficient of the quadratic term since the integral is independent of the radius)
 \[ \int_{\R^2}
  \delta_{\phi(\xi-\xi_1,\eta-\eta_1)-\phi(\xi-\xi_2,\eta-\eta_2)-\tau}   d\eta   = \frac{4\pi |\xi_2-\xi_1|}{|\xi-\xi_1||\xi-\xi_2|} \]
and we estimate the integral with respect to $\xi$ for $\mu \le \lambda/2$
\[  \frac{2\pi} {|\xi_2-\xi_1|}
 \int_{|\xi-\xi_2| \le \mu} |\xi-\xi_2|  |\xi-\xi_1|  d\xi \le 8  \pi \mu^2 .   \]
Together with the $L^4$ Strichartz estimate this implies estimate
\eqref{th:2.1a}.

We turn to the second part, \eqref{th:2.1b}, for which we repeat the
calculus argument. Here we want to recover the stronger bilinear
estimate for the KP equation where one  gains a full derivative. Of
course this can only be done by reducing the domain of the
integration. The final integration then leads to the factor given by
measure of $|\Gamma|$.

Let $ \Phi_i$ be as above. Instead of estimating  the convolution itself we claim that
\[
\begin{split}
\left\Vert \int h(y ,x -y) f_1(y) f_2(x-y)\delta_{\Phi_1}(y)
\delta_{\Phi_2}(x-y)   dy   \right\Vert_{L^2}
& \\ & \hspace{-4cm}  \le   C   \Vert f_1 \Vert_{L^2(\delta_{\Phi_1})}  \Vert  f_2 \Vert_{L^2(\delta_{\Phi_2})}
\end{split}
 \]
where
\[ C^2 =  \sup_{x \in \Sigma_1, y \in \Sigma_2} \int  h^2(z-x, z-y) \delta_{\Phi_1(z-x)} \delta_{\Phi_2(z-y)}   dz.   \]
This follows by the same calculation as above.

We take up the bilinear estimate for the KPII equation and estimate
the integral in \eqref{c2kp} with the integration restricted to a suitable set.
We  fix $\tau$, $\xi_1$, $\xi_2$, $\eta_1$ and $\eta_2$.
 We search an estimate
which contains the measure of $\Gamma$ and  apply the transformation formula
and Fubini's theorem to take the integration with respect to $\Gamma$ as
outer integration. This yields the desired estimate provided
we get uniform bounds  for the integral with respect to  $\xi$ for
$\frac{\eta-\eta_2}{\xi-\xi_2} = \rho \in \R^2$ fixed.
The Jacobian determinant of the map
\[ (\xi, \eta) \to (\xi, \frac{\eta-\eta_2}{\xi-\xi_2} ) \]
from $\R^3$ to $\R^3$ is $\frac{1}{|\xi-\xi_2|^2} $.
We assume that one of the conditions of the second part of the theorem holds.
Let   $h=  \lambda + \left|\frac{\eta_1}{\xi_1} - \frac{\eta_2}{\xi_2}\right| $ be the integrand to be studied. We recall that  $\Gamma \subset \mathbb{R}^2$ and
denote
 \[ B = \left\{(\xi,\eta): \mu/2 \le |\xi|\le \mu,  \frac{\eta-\eta_2}{\xi-\xi_2} \in \mu \Gamma \right\}.  \]
Then
 \[
\begin{split}
\int_B
    \Big(\lambda + \Big| \frac{\eta-\eta_1}{\xi-\xi_1}  - \frac{\eta-\eta_2}{\xi-\xi_2}\Big|\Big)^2
\delta_{\phi(\xi-\xi_1,\eta-\eta_1)-\phi(\xi-\xi_2,\eta-\eta_2)}  d\xi d\eta  \hspace{-8cm}  & \hspace{8cm}
\\  = &  \int_{\Gamma}  \int  \Big(\lambda + \Big| \frac{\eta-\eta_1}{\xi-\xi_1}  - \frac{\eta-\eta_2}{\xi-\xi_2}\Big|\Big)^2   |\xi-\xi_2|^2
 \delta_{g_\rho}(\xi)    d\xi   d\gamma
\\   \le & C \mu^2|\Gamma|
\end{split}
\]
where we calculated with
\[
\begin{split}
  (\xi-\xi_1)^3 & - \frac{(\eta-\eta_1)^2}{\xi-\xi_1} - (\xi-\xi_2)^3 +  \frac{(\eta-\eta_2)^2}{\xi-\xi_1}
\\ = & (\xi-\xi_1)^3 - \frac{( \rho\cdot (\xi-\xi_2) + \eta_2-\eta_1)^2}{\xi-\xi_1}          - (\xi-\xi_2)^3      + (\xi-\xi_2) |\rho|^2
\\ = :& g_\rho(\xi).
\end{split}
\]
Clearly $g_\rho(\xi)=\tau$ if and only if
\[ (\xi-\xi_1)^4 - (\rho\cdot (\xi-\xi_2) +\eta_2-\eta_1)^2-(\xi -\xi_2)^3(\xi-\xi_1)
+ (\xi-\xi_1)(\xi-\xi_2)|\rho|^2 = \tau \]
and hence there are at most $4$ values of $\xi$ where $g_\rho=\tau$.
Moreover
\begin{equation} \begin{split}  \left|\frac{d}{d\xi} g(\xi)\right| = & \Big| 3(\xi-\xi_1)^2 - 2 \rho \frac{\rho (\xi-\xi_2)+\eta_2-\eta_1}{\xi-\xi_1}\\ &  +  \frac{( \rho(\xi-\xi_2) + \eta_2-\eta_1)^2}{(\xi-\xi_1)^2}
- 3(\xi-\xi_2)^2 + |\rho|^2 \Big|\\
= &\left|  3 (\xi-\xi_1)^2 - 3 (\xi-\xi_2)^2
+ \left|\frac{\eta-\eta_2}{\xi-\xi_2} - \frac{\eta-\eta_1}{\xi-\xi_1} \right|^2
\right|
\\  \sim & \, \Big(\lambda + \Big| \frac{\eta-\eta_1}{\xi-\xi_1}  - \frac{\eta-\eta_2}{\xi-\xi_2}\Big|\Big)^2
\end{split}
\label{lower}
\end{equation}
since $g_\rho(\xi)=\tau $ at most at four points, and it satisfies the lower bound there.
\end{proof}

\subsection{Functions of bounded $p$ variation and
their predual}

Functions of bounded $p$ variation were introduced by
N.Wiener \cite{Wiener}. The space of function of bounded $p$ variation
and their pre-dual spaces $U^p$ were defined by D.Tataru and the first
author of this paper in \cite{KoTa}.  $V^p_{KP}$ and $U^p_{KP}$ are   defined by $S(t)V^p$ and $S(t)U^p$. Here $S(t)$ is the
unitary group defined in \eqref{eq:2.1}.  We refer the reader to
\cite{HaHeKo} for the following statements and further properties
about $U^p_{KP}$ and $V^p_{KP}$. Let $\frac1p+\frac1{p'}=1$, $1<p < \infty$. The duality pairing  can formally be written as
\[    B(u,v) = \int v (\partial_t+\partial_{xxx} - \partial_x^{-1} \Delta_y)  \bar u dx dy  dt, \]
 but a correct definition requires more care (see \cite{HaHeKo1}). The space $V^{p'}_{KP}$ is the dual  of $U^p_{KP}$ with respect to this duality pairing.
We denote by $V^p_{rc}$ the subspace of $V^p_{KP}$ of right continuous functions
with limit $0$ at $-\infty$.

The spaces $U^p$ have an atomic structure and
the Strichartz estimates imply
\begin{equation}\label{eq:2.7}  \Vert u \Vert_{L^pL^q} \le c_1\Vert |D_x|^{\frac1{3p}} u \Vert_{U^p_{KP}}  \end{equation}
where $\frac2p + \frac3q = \frac32$, $2\le p,q \le \infty$ and
\begin{equation}\label{eq:2.8}  \Vert u \Vert_{L^pL^q} \le \Vert D^{\frac1p} u \Vert_{U^p_{KP}} \end{equation}
where $\frac1p+\frac1q = \frac12$, $2<p\le \infty$. Moreover one has the inclusions
\begin{equation}\label{eq:2.9}  \Vert u \Vert_{U^p_{KP} } \le c \Vert u \Vert_{V^q_{KP} }
\end{equation}
whenever $q<p$ and $u \in V^q_{KP}$ is right continuous.   Similarly we obtain from the bilinear estimates of Theorem
\ref{th:2.1} under the same assumptions there,
\begin{equation} \label{eq:2.10}
\Vert u_\mu v_\lambda \Vert_{L^2} \le c \mu \Vert u_{\mu} \Vert_{U^2_{KP}}
\Vert v_\lambda \Vert_{U^2_{KP}}
\end{equation}
and
\begin{equation}\label{eq:2.11}  \left\Vert  \int_S \Big(\lambda+ \left|\frac{\eta_1}{\xi_1} - \frac{\eta_2}{\xi_2} \right| \Big)   \hat u_{\mu,\Gamma} \hat v_\lambda \right\Vert_{L^2} \lesssim \mu |\Gamma|^{\frac12}
\Vert u_{\mu,\Gamma} \Vert_{U^2_{KP}} \Vert v_{\lambda} \Vert_{U^2_{KP}  }. \end{equation}
The $V^2_{KP}$ spaces behave well with respect to further decompositions:
\begin{equation} \Vert u_\lambda \Vert_{V^2_{KP}} \le  \Vert u_\lambda \Vert_{l^2 V^2_{KP}}, \end{equation}
see \cite{KoBi}.
They allow the following decomposition

\begin{lemma}
\label{interpo}
 Suppose that $1<p<q<\infty$. There exists $\delta>0$ so that for
any right continuous $v \in V^p_{KP}$ and $M>1$ there exists $u \in U^p_{KP}$
and $w \in U^q_{KP}$ such that
\[ v = u + w \]
\[ \Vert u \Vert_{U^p_{KP}} \le M , \qquad \Vert w \Vert_{U^q_{KP}} \le e^{-\delta M }.
\]
\end{lemma}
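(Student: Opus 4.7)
The plan is to reduce to a step-function decomposition in the free-function variable $F(t)=S(-t)v(t)\in V^p$ and then to invoke a quantitative greedy approximation. By rescaling we may assume $\|v\|_{V^p_{KP}}\le 1$, absorbing the normalization into $\delta$; equivalently $\|F\|_{V^p}\le 1$. The output $u,w$ will be obtained by applying $S(t)$ to a decomposition of $F$.

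For each integer $N\ge 1$, build a step-function approximation $F_N$ by the stopping-time recursion $t_0=-\infty$, $t_{k+1}=\inf\{t>t_k:|F(t)-F(t_k)|\ge 2^{-N}\}$. The $V^p$ bound forces at most $2^{Np}$ stopping times, and $F_N(t):=F(t_k)$ on $[t_k,t_{k+1})$ is a step function satisfying
\[
\|F_N\|_{U^p}^p \le \sum_k |F(t_k)-F(t_{k-1})|^p \le \|F\|_{V^p}^p \le 1,
\]
while $\|F-F_N\|_\infty\le 2^{-N}$ and $\|F-F_N\|_{V^p}\le 2$. Setting $u=S(\cdot)F_N$ and $w=v-u$, the $U^p_{KP}$-bound on $u$ holds trivially since $\|u\|_{U^p_{KP}}\le 1\le M$.

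The core ingredient is the quantitative embedding: for right-continuous $g\in V^p$ with $\|g\|_{V^p}\le C_0$ and $\|g\|_\infty\le\epsilon$, one has $\|g\|_{U^q}\lesssim \epsilon^{1-p/q}$. This is proved by iterating the greedy construction to build nested partitions $\pi_0\subset\pi_1\subset\cdots$, where on each cell of $\pi_{m-1}$ we add stopping times at the finer scale $\epsilon\cdot 2^{-m}$. The induced approximations $g_m$ are nested step functions, the differences $g_m-g_{m-1}$ have jumps of size $\lesssim \epsilon\cdot 2^{-m}$, and the total number of jumps is $\lesssim 2^{mp}/\epsilon^p$. A direct count then gives
\[
\|g_m-g_{m-1}\|_{U^q}^q \le \sum_k |\text{jump}|^q \lesssim \frac{2^{mp}}{\epsilon^p}\Bigl(\frac{\epsilon}{2^m}\Bigr)^q = \epsilon^{q-p}\,2^{-m(q-p)}.
\]
Since $p<q$, the series $\sum_m \|g_m-g_{m-1}\|_{U^q}$ converges geometrically to a sum $\lesssim \epsilon^{1-p/q}$, hence $g_m\to g$ in $U^q$ with the desired bound.

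Applying this embedding to $g=F-F_N$ with $\epsilon=2^{-N}$ gives $\|w\|_{U^q_{KP}}\lesssim 2^{-N(1-p/q)}$. Choosing $N=\lceil CM\rceil$ for a sufficiently large $C>0$ produces $\|w\|_{U^q_{KP}}\le e^{-\delta M}$ for some $\delta=\delta(p,q)>0$. The main technical hurdle is justifying that the telescoping series actually converges in $U^q$ (rather than merely pointwise in $L^\infty$), which relies on the atomic description of $U^q_{KP}$ and on propagating the uniform bound $\|g_m\|_{V^p}\le\|g\|_{V^p}$ along the iteration; once this bookkeeping is in place the lemma follows.
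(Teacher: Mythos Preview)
The paper does not give a proof here; the lemma is cited from \cite{HaHeKo}. Your overall strategy---pull back by $S(-t)$, build the greedy step-function approximation $F_N$, and control the tail by a telescoping sum in $U^q$---is exactly the standard one. There is, however, a real gap in your bound for $u$.

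You claim $\|F_N\|_{U^p}^p \le \sum_k\|F(t_k)-F(t_{k-1})\|^p$. This is false. The atomic definition of $U^p$ bounds a step function by the $\ell^p$-sum of its \emph{values}, not of its jumps, and the two are not comparable. Concretely, take $H=\mathbb{R}$ and let $F$ be monotone with $\|F\|_{V^p}\sim 1$, for instance $F(t)=t$ on $[0,1]$ extended by constants. The greedy construction at level $2^{-N}$ produces $K\sim 2^N$ stopping times with equal increments $2^{-N}$, so your right-hand side is $\bigl(K\cdot 2^{-Np}\bigr)^{1/p}=2^{-N(1-1/p)}\to 0$; yet $F_N$ is a monotone step function with $\|F_N\|_{V^p}\sim 1$, hence $\|F_N\|_{U^p}\ge c>0$ by the embedding $U^p\hookrightarrow V^p$. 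Note also that your inequality would yield $\|u\|_{U^p}\le 1$ for \emph{every} $N$, making the parameter $M$ in the statement superfluous---a clear sign something is off.

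The repair is exactly the telescoping device you already use for $w$: write $F_N=F_0+\sum_{m=1}^N(F_m-F_{m-1})$. Each difference $F_m-F_{m-1}$ has \emph{values} bounded by $C\cdot 2^{-m}$ (both approximants lie within $2^{1-m}$ of $F$) and at most $C\cdot 2^{mp}$ jump points, so the value-sum bound for a single atom yields $\|F_m-F_{m-1}\|_{U^p}\le C$ uniformly in $m$. Hence $\|F_N\|_{U^p}\le C(N+1)$. Choosing $N\sim M/C$ then gives $\|u\|_{U^p}\le M$ together with $\|w\|_{U^q}\lesssim 2^{-N(1-p/q)}\le e^{-\delta M}$. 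The linear growth in $N$ is precisely why the trade-off parameter $M$ appears in the lemma.
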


From \eqref{eq:2.10},   the $L^4$ Strichartz estimates and logarithmic interpolation lemma \ref{interpo}
(see again \cite{HaHeKo}), we  obtain  for any $0<\varepsilon\ll 1$,
\begin{equation}\label{eq:2.12}
\Vert u_\mu v_\lambda \Vert_{L^2} \le C(\varepsilon) \mu \Big(\frac{\lambda}{\mu}\Big)^{\varepsilon}\Vert u_{\mu} \Vert_{V^2_{KP}}
\Vert v_\lambda \Vert_{V^2_{KP}}.
\end{equation}

Similarly the bilinear estimate \eqref{th:2.1b} implies bilinear estimates with respect to $U^2_{KP}$, and via logarithmic interpolation,
estimate with respect to the $V^2_{KP}$ norm.

Later we will make use of the spaces $U^1_{KP}\subset V^1_{KP}$ which
carry identical norms, which, for functions given by $S(-t)u(t) =
\int_{-\infty}^t f(s) ds $ is $\int_{\R} |f| dt$.
We define
\[
\begin{split}
 \Vert v \Vert_{V^1_{KP}}= & \Vert S(-t) v(t) \Vert_{BV}
\\  = &  \sup_{t_0  < t_2 <\cdots< t_n} \sum_{j=1}^n \Vert S(-t_i) v(t_i) - S(-t_{i-1})
v(t_{i-1}) \Vert_{L^2}
\end{split}  \]
where we allow $t_n= \infty$ (recall the convention  $v(\infty)=0$).  We denote by $U^1_{KP}$ the Banach space
of all right continuous functions with $\lim_{t\to -\infty} u(t) = 0 $
for which this norm is finite.  It is not hard to see that
\[ \Vert u \Vert_{U^1_{KP}}= \Vert  S(-t) u(t) \Vert_{BV(\R,L^2)} \]
Then $U^1_{KP} \subset U^2_{KP}$. We
will use an improvement of the estimate for high modulation.  Let
$\Phi \in \mathcal{S}(\R)$ with $\hat \Phi = 1$ for $|\tau|\le 1$,
$\hat \Phi=0$ for $|\tau| \ge 2$. Then, for $f$ with $f' \in L^1$
\[
\begin{split}
\Vert f-\Phi*f \Vert_{L^1}
= & \left\|\int (f(t)-f(s))\Phi(t-s) ds \right\|_{L^1}
\\ \le & \int_{\R}  |\Phi(\sigma)| \int |f(t)-f(t-\sigma)| dt d\sigma
\\ = & \int_{\R} |\sigma| |\Phi(\sigma)| d\sigma \int |f'(t)| dt.
\end{split}
\]
Rescaling and an approximation yield the high modulation estimate
\begin{equation}  \Vert   u_\lambda^{>\Lambda} \Vert_{L^1_t L^2} \le c\Lambda^{-1}
\Vert u_\lambda \Vert_{U^1_{KP}}. \end{equation}

Here $u^{>\Lambda}$ resp $u^{\leq\Lambda}$  means the
Fourier projection
to high resp- low modulation,
 i.e. to
$$|\tau-\omega(\xi,\eta)|:=\Big|\tau-(\xi^3-\frac{|\eta|^2}{\xi})\Big| > \Lambda $$
resp. $\leq \Lambda$.  By the definition of the Fourier restriction spaces
\[ \Vert u^{>\Lambda}  \Vert_{L^2} \le \Lambda^{-b} \Vert u^{>\Lambda}  \Vert_{\dot{X}^{0,b}}, \qquad
 \Vert u^{>\Lambda}  \Vert_{L^2} \le \Lambda^{-1/2} \Vert u^{>\Lambda}  \Vert_{V^2_{KP}}, \]
and similarly
\[   \Vert u^{\sim \Lambda} \Vert_{U^2_{KP}} \le \Lambda^{1/2} \Vert u \Vert_{L^2}. \]
see \cite{HaHeKo}.

\subsection{A bilinear operator}

\label{bilinear}

The bilinear estimates of Theorem \ref{th:2.1} state some off-diagonal
decay in the bilinear terms.
This suggests to decompose waves into wave packets of corresponding Fourier support.
We recall that we partition $\{\lambda, 2\lambda\}\times \R^2$ into sets
$ \Gamma_{\lambda,k}$ \eqref{eq:Gamma}.
 Theorem \ref{th:2.1} effectively diagonalizes the bilinear estimate
in the sector determined by the large frequency. To capture this we define
\[ \Gamma_{\lambda,k,L}= \left\{ (\xi_1,\eta_1) : \lambda \le \xi_1 \le 2\lambda,
\Big|\frac{\eta_1}{\xi_1}-kL \Big|_\infty \le  \frac{L\lambda}2
 \right\} \]
and $\Gamma_{\mu,k, L\lambda/\mu}$ is the set in frequency $|\xi| \sim \mu$
which corresponds to $\Gamma_{\lambda,k}$ in the bilinear estimate of Theorem
\ref{th:2.1}. We define a smooth bilinear projection which is compatible
with scaling and the Galilean symmetry.
Here we again denote the Fourier transform in space time by $\mathcal{F}$
resp. $\hat {\, }$. Let $ \phi_1 \in C^\infty_0((-129,129)\times (-129,129))$, identically $1$
in $(-128,128)\times (-128,128)$ and even. We define
for $L = 2^k$ with $k \ge 1$
\[ \psi_L (s) = \phi_1( s/L)  )- \phi_1(2s/L) \]
and
\[ \rho_L(\xi_1,\eta_1, \xi_2,\eta_2) :=
\psi_L\left( \frac{\frac{\eta_1}{\xi_1}-\frac{\eta_2}{\xi_2}}{\xi_1+\xi_2}    \right). \] For $L=1$, we make the modification
\[
\rho_1(\xi_1,\eta_1,\xi_2,\eta_2):=\phi_1\left( \frac{\frac{\eta_1}{\xi_1}-\frac{\eta_2}{\xi_2}}{\xi_1+\xi_2}    \right).
\]

 \begin{definition} We define the bilinear operators by their Fourier transform
\[  \! \mathcal{F}( T_L(v_\mu ,u_\lambda ))(\tau,\xi,\eta) = \int_{S}
\rho_L(\xi_1, \eta_1,\xi_2,\eta_2)
\hat v_\mu(\tau_1,\xi_1, \eta_1) \hat u_\lambda(\tau_2,\xi_2,\eta_2)
d\mathcal{H}^4.\!  \]
Here $S=\{\xi=\xi_1+\xi_2,\eta=\eta_1+\eta_2,\tau=\tau_1+\tau_2\}$ and $d\mathcal{H}^4$ denotes the 4-Dimensional Hausdorff measure on it.
\end{definition}
The product is the dyadic sum of these bilinear operators.
The key properties of the bilinear projection are its symmetry, and
the bounds of Proposition \ref{pro:3.1} below.

\begin{lemma}\label{TL} The following symmetry identity always holds.
\begin{equation}
\label{T:sym} \begin{split}  \int u_\lambda T_L (v_\lambda ,w_\mu) dx\, dy\, dt = &
 \int v_\lambda T_L(u_\lambda, w_\mu) dx\, dy \, dt
\\ = & \int w_\mu  T_L(u_\lambda, v_\lambda) dx\, dy \, dt.
\end{split}
 \end{equation}
\end{lemma}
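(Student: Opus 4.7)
The plan is to pass to Fourier variables and reduce the cyclic symmetry \eqref{T:sym} to an algebraic identity for the symbol $\rho_L$ on the convolution surface. By Plancherel (with careful accounting of the Fourier conventions) and the definition of $T_L$, each of the three integrals in \eqref{T:sym} equals, up to the same universal constant,
\begin{equation*}
\int_{\Sigma} \rho_L(\xi_i,\eta_i,\xi_j,\eta_j)\,\hat u_\lambda(\tau_1,\xi_1,\eta_1)\,\hat v_\lambda(\tau_2,\xi_2,\eta_2)\,\hat w_\mu(\tau_3,\xi_3,\eta_3)\, d\sigma,
\end{equation*}
taken over the convolution surface
\begin{equation*}
\Sigma \;=\; \{\tau_1+\tau_2+\tau_3=0,\ \xi_1+\xi_2+\xi_3=0,\ \eta_1+\eta_2+\eta_3=0\}
\end{equation*}
equipped with its natural measure. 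The three expressions differ only in which pair $(i,j)\in\{(2,3),(1,3),(1,2)\}$ is fed into $\rho_L$. It therefore suffices to show that $\rho_L(\xi_i,\eta_i,\xi_j,\eta_j)$ is independent of the chosen pair on $\Sigma$.

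For this, the key algebraic input is the identity
\begin{equation*}
K \;:=\; \xi_1\eta_2-\xi_2\eta_1 \;=\; \xi_2\eta_3-\xi_3\eta_2 \;=\; \xi_3\eta_1-\xi_1\eta_3 \qquad\text{on }\Sigma,
\end{equation*}
valid in $\R^2$ and obtained by substituting $\xi_3=-\xi_1-\xi_2$, $\eta_3=-\eta_1-\eta_2$ and expanding. Using $\xi_i+\xi_j=-\xi_k$ (where $\{i,j,k\}=\{1,2,3\}$) and $\eta_i/\xi_i-\eta_j/\xi_j = (\xi_j\eta_i-\xi_i\eta_j)/(\xi_i\xi_j)$ one then obtains
\begin{equation*}
\frac{\eta_i/\xi_i - \eta_j/\xi_j}{\xi_i+\xi_j} \;=\; \pm\,\frac{K}{\xi_1\xi_2\xi_3}\qquad\text{on }\Sigma
\end{equation*}
for every pair, with the sign determined solely by the ordering.

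Finally, $\phi_1$ was chosen to be even, hence so are $\psi_L$ and $\rho_L$, and so
\begin{equation*}
\rho_L(\xi_i,\eta_i,\xi_j,\eta_j) \;=\; \psi_L\!\bigl(K/(\xi_1\xi_2\xi_3)\bigr)
\end{equation*}
on $\Sigma$, \emph{independently} of the pair. Combined with Step~1 this yields \eqref{T:sym}. The only step requiring genuine care is the Plancherel bookkeeping that brings the three integrals into a common form; there is no analytic obstacle, as the entire statement reduces to a symbolic identity for the trilinear symbol of $T_L$ on the convolution surface.
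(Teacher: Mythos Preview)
Your proof is correct and follows the same approach as the paper: both reduce \eqref{T:sym} via Plancherel to an algebraic identity for the argument of $\psi_L$ on the convolution surface $\xi_1+\xi_2+\xi_3=0$, $\eta_1+\eta_2+\eta_3=0$. The paper records this as the single identity
\[
\frac{\frac{\eta_1+\eta_2}{\xi_1+\xi_2}-\frac{\eta_1}{\xi_1}}{\xi_2}=\frac{\frac{\eta_2}{\xi_2}-\frac{\eta_1}{\xi_1}}{\xi_1+\xi_2},
\]
whereas you organize the same computation through the cyclic invariant $K/(\xi_1\xi_2\xi_3)$ and make explicit the role of the evenness of $\phi_1$ in absorbing the residual sign; this is a cleaner packaging of the same calculation, not a different argument.
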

\begin{proof}
This follows from the algebraic calculation
\[ \frac{\frac{\eta_1+\eta_2}{\xi_1+\xi_2} - \frac{\eta_1}{\xi_1} }{\xi_2} =
\frac{\frac{\eta_2}{\xi_2}-\frac{\eta_1}{\xi_1}}{\xi_1+\xi_2}. \]
\end{proof}

The following bilinear estimates provide us with a crucial new tool.
Below the index $\{.\}_+$ denotes the positive part.

\begin{proposition}\label{pro:3.1}
Let $\varepsilon >0$, $1\le p ,q,r \le \infty$ with
\[ \frac1r \le \frac 1p+\frac1q  \]
and $L \in 2^{k}, k=0,1,2\dots $. Then the following estimates hold
\begin{equation} \label{eq:3.10}
\left\Vert  T_L (u_\mu ,v_\lambda)_\lambda \right\Vert_{l^r L^2} \le C \mu
\Big(\frac{L\lambda}{\mu}\Big)^{1-\frac2p+\varepsilon} L^{(1-\frac2q)_++ (\frac2r-1)_+}
    \Vert u_\mu \Vert_{l^pV^2_{KP}} \Vert v_\lambda \Vert_{l^{q}V^2_{KP}}
\end{equation}
and
\begin{equation} \label{eq:3.15}\begin{split}
\Vert (T_L (u_\lambda ,v_\lambda))_\mu  \Vert_{l^rL^2} &
\\ & \hspace{-2cm}
 \le   C \lambda
\Big(\frac{L\lambda}{\mu}\Big)^{(\frac2r-1)_+}
L^{(1-\frac{2}{p}) +(1-\frac2q)_++\varepsilon}
\Vert u_\lambda \Vert_{l^{p}V^2_{KP}} \Vert v_\lambda \Vert_{l^{q}V^2_{KP}} .
\end{split}
\end{equation}
\end{proposition}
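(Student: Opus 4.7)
My approach combines the refined bilinear estimate \eqref{eq:2.11} with the angular decomposition that $T_L$ naturally implements, and uses routine Hölder/orthogonality arguments to pass between scales.

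On the support of $\rho_L$ the quotient $|\eta_1/\xi_1 - \eta_2/\xi_2|/(\xi_1+\xi_2)$ has size $L$, so in the main case $\mu\le \lambda$ the weight $\lambda + |\eta_1/\xi_1 - \eta_2/\xi_2|$ appearing in \eqref{eq:2.11} has size $L\lambda$, which is exactly $|\Gamma_{\mu,k,L\lambda/\mu}|^{1/2}$. I would therefore decompose both inputs at the common angular scale $L\lambda$:
\[
u_\mu = \sum_{k} u_{\mu,\Gamma_{\mu,k,L\lambda/\mu}}, \qquad v_\lambda = \sum_{k'} v_{\lambda,\Gamma_{\lambda,k',L}}.
\]
Re-running the coarea/Cauchy--Schwarz argument of Theorem \ref{th:2.1} with the bounded cut-off $\rho_L$ in place of the weight $(\lambda+|\cdot|)$, and noting that the $(L\lambda)^{-2}$ coming from the smaller multiplier is compensated by $|\Gamma|=(L\lambda)^2$, I expect the atomic bound
\[
\|T_L(u_{\mu,\Gamma_{\mu,k,L\lambda/\mu}}, v_{\lambda,\Gamma_{\lambda,k',L}})\|_{L^2}
\lesssim \mu L^{\varepsilon}\|u_{\mu,\Gamma_{\mu,k,L\lambda/\mu}}\|_{V^2_{KP}}\|v_{\lambda,\Gamma_{\lambda,k',L}}\|_{V^2_{KP}},
\]
where the $L^\varepsilon$ absorbs the $U^2_{KP}\to V^2_{KP}$ loss from the logarithmic interpolation \eqref{eq:2.12}. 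The cut-off $\rho_L$ restricts the pairs $(k,k')$ contributing nontrivially to an $O(1)$-indexed family for each fixed $k$ and localises each atomic output to a single angular $L\lambda$-block determined by $k'$.

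To assemble \eqref{eq:3.10} I would sum the atomic pieces. Almost-orthogonality of the outputs yields $l^2$-summation on the output side at scale $L\lambda$, and the inclusion $V^2_{KP}\subset l^2 V^2_{KP}$ lets me split each $L\lambda$-block into its $(L\lambda/\mu)^2$ constituent $\mu$-blocks (for $u_\mu$) or $L^2$ constituent $\lambda$-blocks (for $v_\lambda$ and the output). Hölder's inequality then converts the $l^2$-sums into $l^p$, $l^q$, $l^r$ sums; the cardinalities produce exactly the factors $(L\lambda/\mu)^{1-2/p+\varepsilon}$, $L^{(1-2/q)_+}$ and $L^{(2/r-1)_+}$ appearing in \eqref{eq:3.10}. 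For \eqref{eq:3.15} I would exploit the symmetry identity \eqref{T:sym}
\[
\int \varphi_\mu\, T_L(u_\lambda,v_\lambda)\,dx\,dy\,dt = \int u_\lambda\, T_L(\varphi_\mu,v_\lambda)\,dx\,dy\,dt,
\]
testing against $\varphi_\mu$ in an appropriate predual of $l^r L^2$ at spatial frequency $\mu$ and applying \eqref{eq:3.10} to $T_L(\varphi_\mu,v_\lambda)_\lambda$, so that \eqref{eq:3.10} is transferred to \eqref{eq:3.15}; alternatively, I would repeat the decomposition-and-summation directly, using the cancellation $\xi_1+\xi_2\sim \mu$ to organise the $L\lambda$-blocks of the two $\lambda$-frequency inputs into compatible pairs.

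The principal technical burden is the bookkeeping: three scales ($\mu$, $\lambda$, $L\lambda$) and three Hölder exponents $(p,q,r)$ must be matched carefully, and the positive-part exponents $(1-2/q)_+$ and $(2/r-1)_+$ appear only when the relevant $l^p$-to-$l^2$ conversion genuinely loses cardinality. Two boundary situations deserve separate attention: the degenerate case $L=1$, where $\rho_1$ is a central bump rather than an annular cut-off so that only one class of pairs survives, and the borderline frequency regime $\mu\sim\lambda$ of \eqref{th:2.1b}, where Theorem \ref{th:2.1} imposes extra Fourier-support hypotheses; the latter can always be handled by invoking the $L^4$ Strichartz bound \eqref{eq:2.4}/\eqref{eq:2.10} as a fall-back.
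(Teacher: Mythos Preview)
Your overall architecture --- decompose at the common angular scale $L\lambda$, use the refined bilinear estimate for atomic pieces, then reassemble via H\"older and almost-orthogonality --- is the right one, and it matches the paper's strategy. But there is a genuine gap in the order of operations, and it costs you exactly the factor that makes \eqref{eq:3.10} useful.

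You first establish a \emph{coarse} atomic bound for $T_L(u_{\mu,\Gamma_{\mu,k,L\lambda/\mu}}, v_{\lambda,\Gamma_{\lambda,k',L}})$ and only afterwards invoke $V^2_{KP}\subset l^2 V^2_{KP}$ and H\"older to pass from the coarse $L\lambda$--block of $u_\mu$ to its $(L\lambda/\mu)^2$ fine $\mu$--blocks. Running \eqref{th:2.1b} at the coarse scale gives (normalise $\mu=1$) only $\Vert T_L(u_\Gamma,v)\Vert_{L^2}\lesssim \Vert u_\Gamma\Vert_{U^2}\Vert v\Vert_{U^2}$: the weight $\sim L\lambda$ exactly cancels $\mu|\Gamma|^{1/2}=L\lambda$. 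Bounding $\Vert u_\Gamma\Vert_{V^2}$ by $\Vert u_\Gamma\Vert_{l^pV^2}$ via the inclusion then produces \emph{no} factor of $(L\lambda/\mu)^{1-2/p}$ when $p<2$, and this negative power is precisely the gain the proposition asserts (and that the later applications in \S\ref{lp-summation} need). The paper instead decomposes $u_\mu$ down to the \emph{finest} blocks $\Gamma_{\mu,l}$ first, applies \eqref{th:2.1b} with $|\Gamma|\sim 1$ to obtain the sharp factor $(L\lambda)^{-1}$ per block, and only then sums the $(L\lambda/\mu)^2$ blocks by the triangle inequality plus H\"older; this produces $(L\lambda)^{-1}\cdot (L\lambda/\mu)^{2-2/p}=(L\lambda/\mu)^{1-2/p}\mu^{-1}$ as required. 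The $\varepsilon$--loss is then handled by running the parallel Strichartz bound at the same fine scale and invoking Lemma~\ref{interpo}.

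Two further points. First, ``re-running the coarea argument with $\rho_L$ in place of the weight'' hides a real issue: $\rho_L$ is a \emph{bilinear} multiplier, and the $U^2_{KP}/V^2_{KP}$ calculus only directly tolerates linear Fourier multipliers. The paper deals with this by expanding $\rho_L$ (after a Galilean normalisation) into a rapidly convergent Fourier series on the relevant product box, thereby reducing $T_L$ to a summable combination of tensor-product multipliers acting separately on each factor. Second, your duality route for \eqref{eq:3.15} is not straightforward: the predual of $l^rL^2$ is $l^{r'}L^2$, so the test function $\varphi_\mu$ sits in an $L^2$-based space, whereas \eqref{eq:3.10} requires $l^pV^2_{KP}$ on its low-frequency input; the exponents you would obtain this way do not match \eqref{eq:3.15}. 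The paper proves \eqref{eq:3.15} directly, again by fine decomposition of one $\lambda$--input into $L^2$ many $\lambda$--blocks and using the $L^{-1}$ gain from the second alternative in Theorem~\ref{th:2.1}.
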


\begin{proof}
  We consider the  case $\mu < \lambda/4$ for \eqref{eq:3.10} first.
By rescaling we may assume that $\mu=1 <   \lambda/4$.
  We decompose the bilinear term further, using that by the definition of
  $T_L$ there is  only a contribution if
\[   \left| \frac{\frac{\eta_1}{\xi_1} -\frac{\eta_2}{\xi_2}}{\xi_1+\xi_2} \right| \sim L. \]
It is important that this relation is equivalent to
\[   \left| \frac{\frac{\eta_1+\eta_2}{\xi_1+\xi_2} -\frac{\eta_2}{\xi_2}}{\xi_1} \right| \sim L. \]

Since $1\le \lambda/4$ we have  $|\xi_2| \sim |\xi_1+\xi_2 |\sim \lambda $
and both $\xi_2$ and $\xi_1+\xi_2$ have the same sign. For simplicity
we assume that both are positive. Recall that $|\xi_1| \sim 1$. We begin with
the case $L=1$ resp.
\[ \left| \frac{\frac{\eta_1}{\xi_1}
    -\frac{\eta_2}{\xi_2}}{\xi_1+\xi_2} \right| \le 1. \]
If
$(\xi_2,\eta_2) \in \Gamma_{\lambda,k}$ then the $l^r$ summation in \eqref{eq:3.10} over  $\Gamma_{\lambda,l}$
contributes only if $|k-l| \le C$.  We simplify our lifes and restrict
to $l=k$. The situation
is similar if $L >1$.
and we obtain the restriction that the indices are of distance $\sim$ $1$
and the slopes have distance $\sim L \lambda $.

Hence, by the same abuse of notation as usual, and with the sets $\Gamma_{1,k,\lambda}$ defined at the beginning of this subsection
 \begin{equation}\label{eq:5.9}
(T_{1}(u_1,v_\lambda))_{\Gamma_{\lambda, k}}=
\left(T_1(u_{\Gamma_{1,k,\lambda}},v_{\Gamma_{\lambda,k}})\right)_{\Gamma_{\lambda, k}} .
 \end{equation}
We search for an $L^2$ estimate and  ignore the outer restriction to
$\Gamma_{\lambda,k}$ in the notation.
 By the bilinear estimate we get
\[ \Vert u_{\Gamma_{1,l}} v_{\Gamma_{\lambda ,k}} \Vert_{L^2}
\le c \frac{1}{\lambda}  \Vert u_{\Gamma_{1,l}} \Vert_{U^2_{KP}}\Vert  v_{\Gamma_{\lambda,k}} \Vert_{U^2_{KP}}.
\]
There are $\sim \lambda^2$ such terms in $u_{\Gamma_{1,k,\lambda}}$ contributing to the sum and hence by H\"older's inequality applied to the finite sum
\[ \Vert   u_{\Gamma_{1,k,\lambda}}v_{\Gamma_{\lambda,k}}\Vert_{L^2}
\le c \frac{1}{\lambda} \lambda^{2-\frac2p}
\Vert u_{\Gamma_{1,k,\lambda}} \Vert_{l^p U^2_{KP}} \Vert v_{\Gamma_{\lambda,k}} \Vert_{U^2_{KP}}. \]
The $L^4$ Strichartz estimate gives
\[ \begin{split}
\Vert   u_{\Gamma_{1,k,\lambda}}v_{\Gamma_{\lambda,k}}\Vert_{L^2}
\le & \sum_{\Gamma_{1,l}\subset\Gamma_{1,k,\lambda}} \Vert   u_{\Gamma_{1,l}}  v_{\Gamma_{\lambda,k}}\Vert_{L^2}
\\ \le & c\sum_{\Gamma_{1,l}\subset\Gamma_{1,k,\lambda}}\lambda^{\frac12}  \Vert   u_{\Gamma_{1,l}} \Vert_{U^4_{KP}} \Vert v_{\Gamma_{\lambda,k}}\Vert_{U^4_{KP}}
\\ \le & c \lambda^{\frac12}  \lambda^{2-\frac2p} \Vert   u_{\Gamma_{1,k,\lambda}} \Vert_{l^p U^4_{KP}} \Vert
v_{\Gamma_{\lambda,k}}\Vert_{U^4_{KP}}.
\end{split}
\]
where the summation is with respect to those $l$ for which $\Gamma_{1,l} \subset
\Gamma_{1,k, \lambda}.$
With the logarithmic interpolation of Lemma \ref{interpo}
 we arrive at
\[ \Vert   u_{\Gamma_{1,k,\lambda}}v_{\Gamma_{\lambda,k}}\Vert_{L^2}
\le c  \lambda^{1-\frac2p+\varepsilon}
\Vert u_{\Gamma_{1,k,\lambda}} \Vert_{l^pV^2_{KP}} \Vert v_{\Gamma_{\lambda,k}} \Vert_{V^2_{KP}}. \]

The summation with respect to $k$ is trivial and we arrive at the
first estimate \eqref{eq:3.10}, also for $L>1$, for which there
are only the obvious modifications, up to an explanation why we may
simply drop the operator $T_L$ once we restricted the support of the
Fourier transforms of the factors.  Bounded spatial Fourier
multipliers define bounded operators on the function spaces $U^p_{KP}$
and $V^p_{KP}$. Our problem is that $T_L$ is a bilinear Fourier
multiplier, and we have to reduce the estimates to estimates of
Fourier multipliers acting on single functions.  We recall that
\[ \rho_L(\xi_1,\eta_1, \xi_2,\eta_2) :=
\psi_L\left( \frac{\frac{\eta_1}{\xi_1}-\frac{\eta_2}{\xi_2}}{\xi_1+\xi_2}    \right)
\]
and we want to bound
$T_L( u_{\Gamma_{\mu,k,L\lambda/\mu}} , u_{\Gamma_{\lambda,k',L}})$
which is zero unless $4\le |k-k'|_\infty \le 20$. Without loss of generality we
consider $64\le k_1-k_1' \le 1000$. We apply a Galilee transform which reduces the problem to $k_1+k_1'=0$, $k_2=0$ and $|k_2'| \le 20$.
More precisely we expand
\begin{equation}
 u_{\Gamma_{\mu,k,L\lambda/\mu}} = \sum_{l\in A} u_{\Gamma_{\mu,l}} \end{equation}
where $A$ is set of cardinality $(L\lambda/\mu)^2$. The function $\rho_L$
is a smooth function on
$ \Gamma_{\mu,k,L\lambda/\mu} \times \Gamma_{\lambda,k',L}$.
We choose a smooth extension supported in
\[
\begin{split}
(( -3\mu,-\frac43\mu) \cup (\frac43 \mu, 3\mu))
\cup  \{ |\eta -kL\lambda \mu|_\infty \le  L\lambda \mu \}
& \\ & \hspace{-6cm} \times  (( -3\lambda,-\frac43\lambda) \cup (\frac43 \lambda, 3\lambda))
\cup  \{ |\eta -kL\lambda^2|_\infty \le  L\lambda^2 \},
\end{split} \]
which, by an abuse of notation, we call again $\rho_L$.
Its derivative satisfies
\[ \left| \partial_{\xi_1}^k \partial_{\eta_1}^\alpha \partial_{\xi_2}^l \partial_{\eta_1}^\beta  \psi_1\left( \frac{\eta_1}{\xi_1} -\frac{\eta_2}{\xi_2} \right) \right| \le c \mu^{-k} \lambda^{-l} (L\mu \lambda)^{-|\alpha|}
(L\lambda^2)^{-|\beta|}.  \]
We expand it into a fast converging Fourier series and we multiply
it by a suitable smooth product cutoff function
\[
\begin{split}
 \rho_{L} =&  \sum_\alpha  \rho_1(\xi_1/\mu)   e^{2\pi i   \alpha_1   \xi_1/\mu}
\rho_2(\eta_1/L\lambda \mu)  e^{2\pi i  \eta_1  \alpha_2/(L\lambda \mu) }
\\ & \times \rho_3(\xi_2/\lambda) e^{2\pi i \alpha_3  \xi_2/\lambda}
\rho_4(\eta_2/(L\lambda^2)  e^{2\pi i \eta_2/(L\lambda^2) }
\\ =: &f^\alpha =  \sum_{\alpha}  a_\alpha f_1^\alpha(\xi_1) f_2^\alpha (\eta_1)
f_3^\alpha (\xi_2) f_4^\alpha ( \eta_2)
\end{split}
\]
with uniform bounded compactly supported functions $f^\alpha_j$ and
summable coefficients $a^\alpha$. It suffices to bound the operator
\[ \begin{split}
T_{f^\alpha} (u_{\Gamma_{\mu,k,L\lambda/\mu}},v_{\Gamma_{\lambda,k',L}} )
= & M_{f_1^\alpha f_2^\alpha} u_{\Gamma_{\mu,k,L\lambda/\mu}}M_{f_3^\alpha f_4^\alpha} v_{\Gamma_{\lambda,k',L\lambda^2}}
\\ = & \tilde u_{\Gamma_{\mu,k,L\lambda/\mu}} \tilde v_{\Gamma_{\lambda,k',L\lambda^2}}.
\end{split}
\]
where $M_f$ denotes the Fourier multiplier.
The bilinear estimate above, together with the  observation  that spatial Fourier multipliers define bounded operators on $U^p_{KP}$ and $V^p_{KP}$ completes the
 argument for the
first estimate \eqref{eq:3.10} if $\mu \le \lambda/4$. If $\mu > \lambda/4 $
we decompose $v_\mu = v_{<\lambda/4} + \sum_{\lambda/4 \le \rho \le \mu}v_\rho$ and apply
\eqref{eq:3.10} to the first term and \eqref{eq:3.15} (which we prove next)
to the remaining terms.

We turn to estimate \eqref{eq:3.15}. It suffices to prove the estimate for $\mu =1 \le  \lambda $. We begin again with $L=1$. As
above it suffices to consider a fixed number $k \in \mathbb{Z}^2$,
which we even may assume to be zero. The summation with respect to $k$
poses no difficulties. The $L^4$ Strichartz estimate implies
$    \Vert u_{\Gamma_{\lambda,k}}^2
\Vert_{L^2} \le c  \lambda\Vert u_{\Gamma_{\lambda, k}}  \Vert_{U^4_{KP}}^2$.
By H\"older's inequality for sequences and orthogonality
\[ \sum_{k}  \Vert (u_{\Gamma_{\lambda,k}} u_{\Gamma_{\lambda,k}})_{\Gamma_{1,k,\lambda}}  \Vert_{l^r(L^2)}
\le   c  \lambda^{(\frac2r-1)_++1}
\Vert u_{\Gamma_{\lambda,k}} \Vert_{l^pV^2_{KP}} \Vert u_{\Gamma_{\lambda,k}} \Vert_{l^pV^2_{KP}} .
\]
The condition
$\frac1r= \frac1p+\frac1q$ suffices for that summation. This time there will be
an important modification for large $L$. As above, if $k \ge 2$, by
the bilinear estimate of Theorem \ref{th:2.1}, and its consequences for $U^2_{KP}$,
\begin{equation}
\Vert u_{\Gamma_{\lambda,0}} v_{\Gamma_{\lambda,k,L}} \Vert_{L^2}
\le c \lambda L^{-1} \Vert u_{\Gamma_{\lambda,0}}\Vert_{U^2_{KP}}
 \Vert  v_{\Gamma_{\lambda,k,L}} \Vert_{U^2_{KP}}.
\end{equation}
As above we have to sum over $L^2$ terms which gives
\[
\Vert T_L (u_{\Gamma_{\lambda,k,L}}, v_{\Gamma_{\lambda,k',L}}) \Vert_{L^2}
\le c L^{1-\frac2p + (1-\frac2q)_+ + \varepsilon} \lambda  \Vert u_{\Gamma_{\lambda,k,L}} \Vert_{l^pV^2_{KP}}  \Vert v_{\Gamma_{\lambda,k,L}} \Vert_{l^q V^2_{KP}}.
\]
We complete the proof with the same type of approximation and summation as above.
\end{proof}

\section{Proof of the main theorem}\label{sketch}
\subsection{A simple proof with three flaws}
We begin with sketching an incomplete proof, attempting to get an iteration
argument work in a simpler and slightly larger space $X^0$ defined by the norm
\[ \Vert u \Vert_{X^0} = \sup_{\lambda>0} \left(   \lambda^{1/2}
\Vert u_\lambda \Vert_{V^{2}_{KP}} + \lambda^{-1} \Vert u_\lambda
\Vert_{\dot{X}^{0,1}}\right). \]
This will almost work, and we will
provide essential modifications which will complete the wellposedness argument.
Existence via the contraction mapping principle follows from the two estimates
\begin{equation}\label{eq:3.1}\lambda^{\frac12}  \left\Vert  \int_0^t S(t-s)\partial_x (uv)_\lambda ds \right\Vert_{V^2_{KP}} \le c \Vert u \Vert_{X^0} \Vert v \Vert_{X^0} \end{equation}
and
\begin{equation}\label{eq:3.2}\lambda^{-1}  \left\Vert  \int_0^t S(t-s) \partial_x (uv)_\lambda ds \right\Vert_{\dot{X}^{0,1}} \le c \Vert u \Vert_{X^0} \Vert v \Vert_{X^0}. \end{equation}

It is useful to observe that
\begin{equation}\label{eq:3.3}\lambda^{1/2} \Vert   u_\lambda \Vert_{V^{2}_{KP}} +  \lambda^{-1}
\Vert u_\lambda \Vert_{\dot{X}^{0,1}} \sim
\lambda^{1/2} \Vert u_\lambda^{\leq\lambda^3} \Vert_{V^2_{KP}}
+ \lambda^{-1} \Vert u_\lambda^{>\lambda^3} \Vert_{\dot{X}^{0,1}}.
\end{equation}
 This implies \eqref{eq:3.3}.

By scaling  it suffices to consider \eqref{eq:3.1} and \eqref{eq:3.2} for $\lambda=1$, and duality reduces
the two estimates to bounds for trilinear integrals
\begin{equation}\label{eq:3.4}
\int uvw_1dxdydt=\int_S\widehat{u}(\xi_1,\eta_1,\tau_1)\widehat{v}(\xi_2,\eta_2,\tau_2)\widehat{w_1}(\xi_3,\eta_3,\tau_3)d\mathcal{H}^8.
\end{equation}
for  $w_1\in V^2_{KP}\cup L^2.$
Here $S$ denotes the subspace of dimension $8$ given by
\[
\{\xi_1+\xi_2+\xi_3=0,
\eta_1+\eta_2+\eta_3=0,\tau_1+\tau_2+\tau_3=0\}
\]
 and $d\mathcal{H}^8$
denotes the $8$-dimensional Hausdorff measure on it.  On this subspace
\eqref{eq:2.6} becomes
\begin{equation}\label{eq:3.5}\tau_1-\omega_1+\tau_2-\omega_2+\tau_3-\omega_3=
-3\xi_1\xi_2\xi_3-\frac{\xi_1\xi_2}{\xi_3}\Big|\frac{\eta_1}{\xi_1}
-\frac{\eta_2}{\xi_2}\Big|^2.
\end{equation}

It has the following important interpretation: If $\tau_i = \xi_i^3 -
\eta_i^2/\xi_i$ for $i=1,2$ then $\Lambda \ge |\xi_1\xi_2(\xi_1+\xi_2)|$ where
\[
\begin{split}
\Lambda:= \left| (\tau_2-\tau_1) - (\xi_2-\xi_1)^3 + \frac{|\eta_2-\eta_1|^2}{\xi_2-\xi_1} \right| = & |\xi_1||\xi_2||\xi_1+\xi_2| \\ &  + \frac{|\xi_1||\xi_2|}{|\xi_1+\xi_2|}  \left| \frac{\eta_1}{\xi_1} - \frac{\eta_2}{\xi_2} \right|^2
\end{split}\]
  $\Lambda$ is  a function of $\xi_i$ and $\eta_i$.
We  decompose $u,v$ into dyadic pieces  according to the size of $\xi$'s and,
by  an abuse of notation
we choose a version which is constant on the sets of consideration.
We decompose $u_i = u_i^{>{\Lambda/3}}  + u_i^{\le \Lambda/3} $.
 Then the trilinear integral vanishes unless at least one term has high modulation since
$ \int u_1^{\le \Lambda/3}  u_2^{\le \Lambda/3} u_3^{\le \Lambda/3} dx dy dt = 0$.
  The Strichartz estimates
 give for $\lambda\ge 1$
\begin{equation}
\label{eq:3.6}
\begin{split}
 \int u_\lambda  v_\lambda  w_1 dx\, dy\, dt \le &
\Vert w_1 \Vert_{L^2} \Vert u_\lambda v_\lambda \Vert_{L^2}
\\ \le & C
\Vert w_1 \Vert_{L^2} \left(\lambda^{\frac12} \Vert u_\lambda \Vert_{V^2_{KP}}\right)
\left( \lambda^{\frac12} \Vert v_\lambda \Vert_{V^2_{KP}} \right)
\end{split}
\end{equation}
which yields by scaling and orthogonality of the Paley-Littlewood pieces
\[ \left\Vert  \partial_x \int_0^t S(t-s) u_\lambda v_\lambda ds \right\Vert_{\dot X^{0,1}}
\le C \left(\lambda^{\frac12} \Vert u_\lambda \Vert_{V^2_{KP}}\right)
\left( \lambda^{\frac12} \Vert v_\lambda \Vert_{V^2_{KP}} \right). \]
By the bilinear estimate of Theorem \ref{th:2.1} - see also \eqref{eq:2.10}
\begin{equation}
\label{eq:3.7}
\begin{split}
 \left| \int u^{>\lambda^2/3}_\lambda  v_\lambda  w_1 dx\, dy\, dt \right|\le &
\Vert u^{>\lambda^2/3}_\lambda  \Vert_{L^2} \Vert v_\lambda w_1 \Vert_{L^2}
\\ \le &
c  \lambda^{-1} \Vert  u^{>\lambda^3/3}_\lambda  \Vert_{V^2_{KP}}\Vert v_\lambda \Vert_{U^2_{KP}}
\Vert w_1 \Vert_{U^2_{KP}}
\end{split}
\end{equation}
 and hence
\[
\begin{split}
 \left\Vert  \partial_x \int_0^t S(t-s) (u_\lambda v_\lambda)_1 ds \right\Vert_{\dot X^{0,1}}
+\left\Vert  \partial_x \int_0^t S(t-s) (u_\lambda v_\lambda)_1 ds \right\Vert_{V^2_{KP}}
\hspace{-3cm}&  \\ \le &  c \lambda^{\frac12} \Vert u_\lambda \Vert_{U^2_{KP}}
 \lambda^{\frac12} \Vert v_\lambda \Vert_{U^2_{KP}}.
\end{split}
\]
 For $\mu \le 1$ we estimate using the Strichartz estimate \eqref{eq:2.8} for $p=q=4$ and the embedding $V^2_{KP} \subset U^4_{KP}$
\begin{equation}
\label{eq:3.8}
\begin{split}
 \int u^{>\mu/3}_\mu  v_1  w_1 dx\, dy\, dt \le &
\Vert u^{>\mu/3}_\mu  \Vert_{L^2} \Vert v_1 w_1 \Vert_{L^2}
\\ \le &
c(\mu^{-1}  \Vert  u_\mu  \Vert_{\dot{X}^{0,1}}) \Vert v_1 \Vert_{V^2_{KP}} \Vert w_1 \Vert_{V^2_{KP}}
\end{split}
\end{equation}
and the bilinear estimate \eqref{eq:2.10} to arrive at
\begin{equation}
\label{eq:3.9}
\begin{split}
 \int u_\mu  v^{>\mu/3}_1  w_1 dx\, dy\, dt \le & c
\mu \Vert u_\mu  \Vert_{U^2_{KP}} \mu^{-\frac12} \Vert v_1 \Vert_{V^2_{KP}}  \Vert     w_1 \Vert_{U^2_{KP}}
\\ = & c (\mu^{1/2} \Vert u_\mu \Vert_{U^2_{KP}})\Vert v_1 \Vert_{V^2_{KP}}  \Vert     w_1 \Vert_{U^2_{KP}},
\end{split}
\end{equation}
thus
\[
\begin{split}
 \left\Vert  \partial_x \int_0^t S(t-s) (u_\mu v_1)_1 ds \right\Vert_{\dot X^{0,1}}
+\left\Vert  \partial_x \int_0^t S(t-s) (u_\mu v_1)_1 ds \right\Vert_{V^2_{KP}}
\hspace{-6cm}&  \\ \le &  c\left( \mu^{\frac12} \Vert u_\mu \Vert_{U^2_{KP}} + \mu^{-1} \Vert u_\mu \Vert_{\dot X^{0,1}}\right)
 \Vert v_1 \Vert_{U^2_{KP}}.
\end{split}
\]

To achieve \eqref{eq:3.1} and \eqref{eq:3.2}, there are three issues to resolve:
\begin{enumerate}
\item The summability with respect to $\lambda$ and $\mu$ requires improved estimates to obtain \eqref{eq:3.1} and \eqref{eq:3.2}.
\item In \eqref{eq:3.7} and \eqref{eq:3.9}, we have to replace $U^2_{KP}$ by $V^2_{KP}$.
\item The function $u=S(t) u_0$ for $t>0$ and $u=0$ for $t <0$ is not
in $\dot{X}^{0,1}$. We need a variant of the estimates for solutions
to the homogeneous initial value problem.
\end{enumerate}

Here as always we oversimplify things a bit: We have to consider more general
frequency combinations, and we only know that the two highest frequencies have to be of comparable size, otherwise the trilinear integral vanishes, which as always we ignore since we want to keep the formulas simpler, and there is
no new difficulty connected  with that.

\subsection{$l^p$ summation and bilinear estimate}\label{lp-summation}

We begin to explain  the modifications for the proof.
We use  $l^ql^p(V^2_{KP})$ with  $1\leq q\leq\infty,1<p<2$
 and replace $\dot{X}^{0,1}$ by $\dot{X}^{0,b}$ with some $b \in (\frac56,1)$
as discussed in the introduction.

\begin{definition}
Let  $X$ be  the space of all distributions
for which
 \[ \Vert u \Vert_{X} :=
\left\Vert\lambda^{\frac12} \Vert u_\lambda \Vert_{l^p V^2_{KP}}
+  \lambda^{2-3b} \Vert u_\lambda \Vert_{ \dot X^{0,b}}\right\Vert_{l^q_\lambda}
< \infty.   \]
\end{definition}

We next formulate  a bilinear estimate.
\begin{proposition}[Bilinear estimates for the quadratic term]
\label{pr:finbin}  For $u,v\in X$, we have
\begin{equation} \label{finbin}  \left\Vert \int_{-\infty}^t S(t-s) \partial_x(uv) ds \right\Vert_{X}
\le c \Vert u \Vert_X \Vert v \Vert_X.   \end{equation}
\end{proposition}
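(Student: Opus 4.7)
The plan is to bound the two components of the $X$ norm separately: the
$l^q l^p V^2_{KP}$ part by duality against $U^2_{KP}$ to produce a trilinear
integral, and the $l^q \dot X^{0,b}$ part by Fourier-side division by
$\tau-\omega(\xi,\eta)$. Both reduce to the bilinear bounds of Proposition
\ref{pro:3.1} after the decomposition
$uv = \sum_{L} T_L(u,v) = \sum_{\mu,\lambda,L} T_L(u_\mu,v_\lambda)$, with the
output dyadically localized at some frequency $\nu$. By the symmetry identity
of Lemma \ref{TL} I may assume $\mu\le \lambda$, and since the trilinear integral
vanishes unless the two largest of $\{\mu,\lambda,\nu\}$ are comparable, only
two regimes survive: (a) $\mu\le \lambda\sim \nu$ (low--high to high) and
(b) $\mu\sim\lambda\gtrsim\nu$ (high--high to low).

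For the $V^2_{KP}$ component I would pair against a test function $w$ dyadically
localized at frequency $\nu$ with $\|w\|_{l^{p'}U^2_{KP}}\le 1$, use Lemma
\ref{TL} to shift $T_L$ onto a pair involving $w$, and estimate the resulting
$L^2\times L^2$ product via \eqref{eq:3.10} in regime (a) or \eqref{eq:3.15} in
regime (b). The exponents in Proposition \ref{pro:3.1} deliver a power
$(L\lambda/\mu)^{1-2/p+\varepsilon}$ which, combined with the output weight
$\nu^{1/2}$ and the condition $\tfrac{1}{r}\le \tfrac{1}{p}+\tfrac{1}{q}$
governing the angular summation, makes the sums in $L$ and in the dyadic
frequencies convergent precisely when $p<2$.

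For the $\dot X^{0,b}$ component I would exploit the resonance identity
\eqref{eq:3.5}, which forces at least one of the three factors to carry
modulation
\[
\Lambda \gtrsim |\xi_1\xi_2\xi_3| + \frac{|\xi_1\xi_2|}{|\xi_3|}\Bigl|\tfrac{\eta_1}{\xi_1}-\tfrac{\eta_2}{\xi_2}\Bigr|^2 \gtrsim \mu\lambda^2(1+L^2)
\]
in regime (a), with an analogous bound in regime (b). Division by $\tau-\omega$
in the Duhamel integral reduces the task to controlling
$\nu^{2-3b}\Lambda^{-b}\|(T_L(u_\mu,v_\lambda))_\nu\|_{L^2}$. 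I would split
into three sub-cases according to which factor sees the high modulation, use
$\|u^{>\Lambda}\|_{L^2}\le\Lambda^{-b}\|u\|_{\dot X^{0,b}}$ and
$\|u^{>\Lambda}\|_{L^2}\le\Lambda^{-1/2}\|u\|_{V^2_{KP}}$ to transfer the
modulation cost onto that factor, and apply Proposition \ref{pro:3.1} to the
remaining bilinear piece. Choosing $b\in(5/6,1)$ balances the resulting powers
of $\mu$, $\lambda$, $\nu$, $L$.

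The main obstacle lies in Flaw 1 from Section \ref{sketch}: the summations over
$L$ and over $\mu\le \lambda$ in regime (a) at $\nu\sim\lambda$ are tight. The
logarithmic interpolation built into Proposition \ref{pro:3.1} costs an
$\varepsilon$, and the critical $L$ sum carries exponent
$1-\tfrac{2}{p}+\varepsilon$, which is strictly negative precisely when $p<2$;
this is the reason for the hypothesis $p<2$ in Theorem \ref{wellposed}.
Flaw 2 is handled automatically, since Lemma \ref{interpo} has already been
used to replace $U^2_{KP}$ by $V^2_{KP}$ in Proposition \ref{pro:3.1}, and
Flaw 3 by working throughout with the Duhamel integral
$\int_{-\infty}^t S(t-s)F\,ds$, whose image lies in
$V^2_{KP}\cap \dot X^{0,b}$, rather than with $S(t)u_0$ extended by zero.
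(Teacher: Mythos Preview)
Your overall architecture is right---duality, the $T_L$ decomposition, and a high-modulation case split---but there is a genuine gap in regime (a) when the \emph{low-frequency} input $u_\mu$ carries the high modulation. In that sub-case you propose to extract $\|u_\mu^{>\Lambda}\|_{L^2}\le \Lambda^{-b}\|u_\mu\|_{\dot X^{0,b}}$ (with $\Lambda\sim \mu\lambda^2L^2$) and then bound the remaining piece $T_L(v_\lambda,w_\lambda)_\mu$ in $L^2$ via Proposition~\ref{pro:3.1}. Carrying this through (rescale to $\lambda=1$, pair against $w_1\in l^{p'}V^2_{KP}$, use \eqref{eq:3.15} with $r=2$, $q=p'$) gives a factor $\mu^{-b}L^{-2b+\varepsilon}$, i.e.\ $\mu^{2b-2}$ after extracting the $X$-weight $\mu^{2-3b}$ on $u_\mu$. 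This does \emph{not} sum over $\mu\le 1$ for any $b<1$, so the proof does not close with Proposition~\ref{pro:3.1} alone. The condition $b>5/6$ you quote is precisely the threshold that arises from a different estimate: the paper places $u_\mu^{>\mu L^2}$ in $L^2_tL^3_{xy}$ via interpolation between $L^2_{t,x,y}$ and the endpoint Strichartz estimate $L^2_tL^6_{xy}$ (gaining an extra $\mu^{1/12}$ from the $|D_x|^{1/6}$ weight), and pairs it against $T_L(v_1,w_1)$ in $L^2_tL^{3/2}_{xy}$, the latter controlled by the Strichartz pair $L^4_tL^3_{xy}$ applied to each factor. This yields $\mu^{2b-5/3}$ rather than $\mu^{2b-2}$, and the $\mu$-sum converges exactly when $b>5/6$. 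None of this information is contained in Proposition~\ref{pro:3.1}, so your proposal is missing an essential ingredient.

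Two smaller points: your treatment of the $V^2_{KP}$ component omits the high-modulation split entirely---Proposition~\ref{pro:3.1} yields space-time $L^2$ bounds, but $w\in l^{p'}U^2_{KP}$ is not in $L^2_{t,x,y}$, so you cannot simply pair. The paper handles both the $V^2_{KP}$ and the $\dot X^{0,b}$ components by duality (against $w\in l^{p'}V^2_{KP}$ and $w\in \dot X^{0,1-b}$ respectively) and then performs the same three-way modulation split in each case; your ``division by $\tau-\omega$'' only treats the output-high-modulation sub-case directly. These are structural issues you would have discovered on carrying out the estimates, but the decisive obstruction is the missing Strichartz input for the low-frequency high-modulation factor.
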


In our proof  we obtain a slightly  stronger bilinear estimate. We will replace the $U^2_{KP}$ by $V^2_{KP}$ at several places.

\begin{proof}
Using a Littlewood-Paley decomposition,  a duality argument
and an expansion of \eqref{finbin}  the estimate   follows from the next four inequalities. The high $\times $ high to low type estimates are
\begin{equation}
\label{first}
\begin{split}
 \int u_\lambda  v_\lambda  w_\mu dx\, dy\, dt &   \le C
\mu^{3b-3}\Big(\frac{\mu}{\lambda}\Big)^{2-2b -\varepsilon}   \\ & \hspace{-1cm}  \times \Big(\lambda^{\frac12} \Vert u_\lambda \Vert_{l^pV^2_{KP}} \Big)
  \Big( \lambda^{\frac12} \Vert v_\lambda \Vert_{l^pV^2_{KP}} \Big) \Vert w_\mu \Vert_{\dot X^{0,1-b}}
\end{split}
\end{equation}
\begin{equation}
\label{second}
\begin{split}
 \int u_\lambda  v_\lambda  w_\mu dx\, dy\, dt  &   \le C
\mu^{-\frac32}\Big(\frac{\mu}{\lambda}\Big)^{2-\frac2p-\varepsilon}\\ & \hspace{-1cm} \times\Big(\lambda^{\frac12} \Vert u_\lambda \Vert_{l^pV^2_{KP}} \Big)
  \Big( \lambda^{\frac12} \Vert v_\lambda \Vert_{l^pV^2_{KP}} \Big) \Vert w_\mu \Vert_{l^{p'}V^2_{KP}}.
\end{split}
\end{equation}
which we complement by low  $ \times $ high to high estnates
\begin{equation}
\label{third}
\begin{split}
 \int u_\mu  v_\lambda  w_\lambda  dx\, dy\, dt    \le  &
c \lambda^{-\frac32}\Big(\frac{\mu}{\lambda}\Big)^{\min\{ \frac2p-1-\varepsilon, 2b-\frac53 \} }
\\ &\hspace{-3cm} \times  \left( \mu^{1/2}   \Vert  u_\mu  \Vert_{l^pV^2_{KP}}+ \mu^{2-3b} \Vert u_\mu \Vert_{\dot{X}^{0,b}} \right) \Big(\lambda^{\frac12}\Vert v_\lambda \Vert_{l^pV^2_{KP}}\Big) \Vert w_\lambda \Vert_{l^{p'}V^2_{KP}}
\end{split}
\end{equation}
\begin{equation}
\label{fourth}
\begin{split}
 \int u_\mu  v_\lambda  w_\lambda  dx\, dy\, dt  &   \le
c \lambda^{3b-3}\Big(\frac{\mu}{\lambda}\Big)^{\min\{b-\frac12-\varepsilon,3b-\frac{5}{2}\}}
\\ &  \hspace{-3cm}\times  \left( \mu^{1/2}  \Vert  u_\mu  \Vert_{l^pV^2_{KP}}+ \mu^{2-3b} \Vert u_\mu \Vert_{\dot{X}^{0,b}} \right) \Big(\lambda^{\frac12}\Vert v_\lambda \Vert_{l^pV^2_{KP}}\Big) \Vert w_\lambda \Vert_{\dot X^{0,1-b}}
\end{split}
\end{equation}
for $\mu \le \lambda$. Proposition \ref{pr:finbin} and more precisely \eqref{finbin} follows by  summing  up the $\mu$ and $\lambda$, which is trivial.
More precisely we would have to consider
frequencies $\lambda_1 $ and $\lambda_2$ for the first estimates, but, since
on the Fourier side the Fourier variables $\xi_1$ and $\xi_2$ have to add up
to something of size $\sim \mu$ which we assume always less then $\lambda$, it suffices to consider neighboring dyadic
intervals resp $\lambda_1 \sim \lambda_2$. To simplify the notation we restrict
to $\lambda_1 = \lambda_2 = \lambda$ and we deal similarly with the other inequalities.

We  turn to the proof of     the four main estimates \eqref{first}-\eqref{fourth}.  For the [(high,high)$\to $ low]  type estimates \eqref{first} and \eqref{second}, by rescaling, we assume that $\mu=1$. We decompose
\[   \int u_\lambda  v_\lambda  w_1 dx\, dy\, dt
= \sum_{L \in 2^{\mathbb{N}}}   \int T_L(u_\lambda v_\lambda) w_1 dx \, dy \,dt \]
where the sum runs over $L=2^{\mathbb Z_+}$.

At least one of the terms has to have high modulation, i.e. modulation at least  $\ge L^2 \lambda^2/3$. For simplicity we will ignore the denominator $3$.  Now,
if $L> 1$ - the difference for $L=1$ is only in notation -
\begin{equation}\label{w-high}
\begin{split}
 \left|\int T_L(u_\lambda, v_\lambda) w_1^{\ge L^2 \lambda^2} dx dy dt \right|
\le \hspace{-3cm} &\hspace{3cm}    \Vert T_L (u_\lambda, v_\lambda)_1 \Vert_{l^2L^2} \Vert w_1^{\ge L^2 \lambda^2} \Vert_{l^2 L^2}  \\
\le & C \lambda  \Vert u_\lambda \Vert_{l^2V^2_{KP}}
 \Vert v_\lambda \Vert_{l^2V^2_{KP}}  (L\lambda)^{-2(1-b)}  \Vert w_1 \Vert_{l^2\dot{X}^{0,1-b}}.
\end{split}
\end{equation}

Since for $1<p<2$,
\[\Vert w_1\Vert_{l^2 \dot{X}^{0,1-b}}\approx \Vert w_1 \Vert_{\dot{X}^{0,1-b}}, \Vert u_\lambda\Vert_{l^2V^2_{KP}}\le \Vert u_\lambda\Vert_{l^p V^2_{KP}}\]
we obtain
\[
\begin{split}
\sum_{L} \left|\int T_L(u_\lambda v_\lambda) w_1^{\ge L^2 \lambda^2} dx dy dt \right|
& \\ & \hspace{-4cm} \le c \lambda^{2b-2}  \left(\lambda^{1/2} \Vert u_\lambda \Vert_{l^pV^2_{KP}} \right)
\left( \lambda^{1/2}  \Vert v_\lambda \Vert_{l^pV^2_{KP}} \right)   \Vert w_1 \Vert_{\dot{X}^{0,1-b}}.
\end{split}
\]
\eqref{w-high} can also be bounded, for $1<p<2$, by
\[ \lambda (L\lambda)^{\frac2p-1} \Vert u_\lambda \Vert_{l^pV^2_{KP}}
 \Vert v_\lambda \Vert_{l^pV^2_{KP}}  (L\lambda)^{-1}  \Vert w_1 \Vert_{l^{p'}V^2_{KP}}.
\]
Here we used H\"older's inequality and then the high modulation estimate for $w$ and \eqref{eq:3.15} with $r=q=p$
for the product. We complete the proof of \eqref{first} for the case the $w$ has high  modulation by
\[
\begin{split}
\sum_{L} \left|\int T_L(u_\lambda v_\lambda) w_1^{\ge L^2 \lambda^2} dx dy dt \right|
& \\ & \hspace{-4cm} \le c \lambda^{\frac2p-2}  \left(\lambda^{1/2} \Vert u_\lambda \Vert_{l^pV^2_{KP}} \right)
\left( \lambda^{1/2}  \Vert v_\lambda \Vert_{l^pV^2_{KP}} \right)   \Vert w_1 \Vert_{l^{p'}V^2_{KP}}.
\end{split}
\]
Next we use the symmetry property of Lemma \ref{TL} to deal with the case
that $v$ has high modulation:
\begin{equation}\label{w-low}\begin{split}
 \left|\int T_L(u_\lambda, v_\lambda^{\ge L^2\lambda^2})_1  w_1^{< L^2\lambda^2} dx dy dt \right|
= \hspace{-3cm} & \hspace{3cm}  \left|\int  v_\lambda^{\ge L^2\lambda^2} T_L (u_\lambda, w_1^{L^2\lambda^2})_\lambda dx dy dt \right|
\\
\le &   \Vert T_L (u_\lambda, w_1^{<L^2\lambda^2})_\lambda \Vert_{l^2L^2} \Vert v_\lambda^{\ge L^2 \lambda^2}  \Vert_{l^{2} L^2}  \\
\le & C  \lambda^{\frac2p-1+\varepsilon} L^\varepsilon \Vert u_\lambda \Vert_{l^pV^2_{KP}}
 \Vert w_1 \Vert_{l^{p'}V^2_{KP}}  (L\lambda)^{-1}   \Vert v_\lambda  \Vert_{l^{p}V^2_{KP}}.
\end{split}
\end{equation}
with the obvious modification if $L=1$. Here we used the high modulation estimate for $v_\lambda$
and \eqref{eq:3.10} with $r=2$, and $q=p'$.
The summation with respect to $L$ gives
\[
\begin{split}
\sum_{L}  \left|\int T_L(u_\lambda v_\lambda^{\ge L^2\lambda^2}) w_1^{< L^2\lambda^2} dx dy dt \right|
& \\ & \hspace{-4cm} \le
C  \lambda^{\frac2p-3+\varepsilon} \Big( \lambda^{1/2} \Vert u_\lambda \Vert_{l^pV^2_{KP}}\Big)
  \Vert w_1 \Vert_{l^{p'}V^2_{KP}}  \Big( \lambda^{1/2}    \Vert v_\lambda  \Vert_{l^pV^2_{KP}}\Big).
\end{split}
\]
In the same way, we can bound \eqref{w-low} by
\[\lambda^{\varepsilon} \Vert u_\lambda \Vert_{l^2V^2_{KP}}
 \Vert w_1^{<L^2\lambda^2} \Vert_{l^2V^2_{KP}}  (L\lambda)^{-1}   \Vert v_\lambda  \Vert_{l^2V^2_{KP}}.\]
Notice that\[ \Vert f^{\le L^2\lambda^2} \Vert_{V^2_{KP}}
 \lesssim  L^{2b-1}\lambda^{2b-1} \Vert f \Vert_{\dot{X}^{0,1-b}}.\]
\eqref{first} and \eqref{second} follows by a trivial summation over  $L$.

Now we turn to \eqref{third} and \eqref{fourth} and  rescale to $\lambda=1$. We  decompose the factors
in the same fashion as above
\[ \int u_\mu v_1 w_1 dx dy dt = \sum_L \int T_L(u_\mu v_1) w_1 dx dy dt .\]
As above, using \eqref{eq:3.10} with $r=q=p=2$
\[\begin{split} \left| \int T_L (u_\mu v_1) w_1^{\ge \mu L^2} dx dy dt \right|
\le & \Vert T_L(u_\mu v_1)_1 \Vert_{l^2 L^2}
\Vert w_1^{\ge \mu L^2} \Vert_{l^2L^2}
\\ & \hspace{-4.5cm} \le  C \mu^{\frac12}  (L/\mu)^{\varepsilon}  (\mu L^2)^{b-1}
(\mu^{1/2} \Vert u_\mu \Vert_{l^2V^2_{KP}}) \Vert v_1 \Vert_{l^2V^2_{KP}}
  \Vert w_1 \Vert_{l^2 \dot X^{0,1-b}}
\end{split}
\]
resp. taking $r=p=q<2$,
\[\begin{split} \left| \int T_L (u_\mu v_1) w_1^{\ge \mu L^2} dx dy dt \right|
\le & \Vert T_L(u_\mu v_1)_1 \Vert_{l^p L^2}
\Vert w_1^{\ge \mu L^2} \Vert_{l^{p'}L^2}
\\ & \hspace{-4cm} \le  C (L/\mu)^{1-\frac2p+\varepsilon}  L^{-1}
\mu^{1/2} \Vert u_\mu \Vert_{l^pV^2_{KP}} \Vert v_1 \Vert_{l^pV^2_{KP}}
  \Vert w_1 \Vert_{l^{p'} V^2_{KP}}.
\end{split}
\]

The summation with respect to $L$ gives
\[ \begin{split} \sum_L  \left| \int T_L (u_\mu v_1) w_1^{\ge \mu L^2} dx dy dt \right|
&  \le C \mu^{b-\frac12-\varepsilon}\\ & \hspace{-3cm}
\times\left( \mu^{1/2} \Vert u_\mu \Vert_{l^pV^2_{KP}}\right)
 \Vert v_1 \Vert_{l^p V^2_{KP}}
 \Vert w_1 \Vert_{\dot X^{0,1-b}}.
\end{split}
\]
resp.
\[
\begin{split}
 \sum_L  \left| \int T_L (u_\mu v_1) w_1^{\ge \mu L^2} dx dy dt \right| & \le  C \mu^{\frac2p-1-\varepsilon}\\ & \hspace{-3cm}
\times\left( \mu^{1/2} \Vert u_\mu \Vert_{l^pV^2_{KP}}\right)
 \Vert v_1 \Vert_{l^p V^2_{KP}}
 \Vert w_1 \Vert_{l^{p'}V^2_{KP}}.
\end{split}
\]

The same computation gives
\[
\begin{split}
\sum_L  \left| \int T_L (u_\mu w_1) v_1^{\ge \mu L^2} dx dy dt \right|
&  \le C  \mu^{\frac2p-\frac12-\varepsilon}\\ &
\hspace{-4cm}\times\left( \mu^{1/2} \Vert u_\mu \Vert_{l^pV^2_{KP}}\right)
 \Vert v_1 \Vert_{l^pV^2_{KP}}\Vert w_1 \Vert_{l^{p'}V^2_{KP}}
\end{split}
\]
resp.
\[
\begin{split}
\sum_L  \left| \int T_L (u_\mu w_1^{\leq \mu L^2}) v_1^{\ge \mu L^2} dx dy dt \right|
&  \le  C \mu^{b-\frac12-\varepsilon}\\ &
\hspace{-4cm}\times\left( \mu^{1/2} \Vert u_\mu \Vert_{l^2V^2_{KP}}\right)
 \Vert v_1 \Vert_{l^2V^2_{KP}}
\Vert w_1\Vert_{\dot X^{0,1-b}}.
\end{split}
\]
Here we used
\[ \Vert w^{<\mu L^2}_1 \Vert_{V^2_{KP}} \le C \mu^{b-\frac12}L^{2b-1}  \Vert w_1 \Vert_{\dot X^{0,1-b}}. \]
The last term with the high modulation on $u_\mu$ is different, and it is the most interesting:
\[ \left| \int u_\mu^{\ge \mu L^2}  T_L ( v_1, w_1)  dx dy dt \right|
\le  \Vert T_L( v_1, w_1)_\mu  \Vert_{l^{2} L^{2}_tL^{3/2}_{xy}}
\Vert u_\mu^{\ge \mu L^2} \Vert_{l^{2}L^2_t L^3_{xy}} . \]
We continue with the endpoint Strichartz estimate
\[
 \Vert  u_\mu^{\ge \mu L^2}  \Vert_{L^2_t L^3} \le   \Vert u_\mu^{\ge \mu L^2}
 \Vert_{L^2}^{1/2}
\Vert u_\mu^{\ge \mu L^2}  \Vert_{L^2_tL^6}^{1/2}
 \le C  (\mu L^2)^{\frac14-b} \mu^{\frac1{12}} \Vert u_\mu \Vert_{\dot X^{0,b}}
\]
for each part localized in $\eta$ and we achieve
\[\begin{split}  \left| \int u_\mu^{\ge \mu L^2}  T_L ( v_1 w_1)  dx dy dt \right|
&  \\ &\hspace{-4cm} \le C \mu^{2b  -\frac53 } L^{\frac12-2b+\frac2p-1} \Vert  v_1 \Vert_{l^p L^4_tL^3} \Vert  w_1 \Vert_{l^{p'} L^4_tL^3}
\mu^{2-3b} \Vert u_\mu \Vert_{\dot X^{0,b}}.
\end{split}
\]
By  Proposition \ref{pro:3.1}, we drop $T_L$ here.
The exponent $(4,\ 3)$ is a Strichartz pair. The summation with respect to $L$ is trivial. It gives
\[
\begin{split}
\sum_L \left| \int u_\mu^{\ge \mu L^2}  T_L ( v_1, w_1)  dx dy dt \right|
& \\ & \hspace{-3cm}  \le  c   \mu^{2b-\frac5{3}}      \left( \mu^{2-3b} \Vert u_\mu \Vert_{l^p\dot X^{0,b}}\right)
      \Vert v_1 \Vert_{l^p V^2_{KP}} \Vert w_1 \Vert_{l^{p'} V^2_{KP}}.
\end{split}
\]
resp.
\[
\begin{split}
\sum_L \left| \int u_\mu^{\ge \mu L^2}  T_L ( v_1, w_1^{\leq \mu L^2})  dx dy dt \right|
& \\ & \hspace{-3cm}  \le  c   \mu^{3b-\frac{5}{2}}      \left( \mu^{2-3b} \Vert u_\mu \Vert_{\dot X^{0,b}}\right)
      \Vert v_1 \Vert_{V^2_{KP}} \Vert w_1 \Vert_{ \dot{X}^{0,1-b}}.
\end{split}
\]
The summation with respect to
$\mu$ requires
$  b > \frac56$ and
we arrive at \eqref{third} and \eqref{fourth}  .
\end{proof}

\subsection{The initial data, the proof of wellposedness}\label{initial data}

It remains to consider estimate $S(t) u_0$ in terms of the initial data.
Let $$\tilde{u}(t)=\chi_{[0,\infty)}S(t)u_0.$$ As we pointed out in
issue iii), it is not in $\dot{X}^{0,b}$ for any $1/2<b\leq1$, thus it
is not in $X$ unless it is trivial.
Let
\[ \Vert u \Vert_Y= \left\Vert\lambda^{1/2} \Vert u_\lambda \Vert_{l^p U^1_{KP}} \right\Vert_{l^q_{\lambda}}\]
to shorten the notation. Then by construction
\[ \Vert \tilde u \Vert_{Y} \le \Vert u_0 \Vert_{ l^ql^pL^2}. \]

The two estimates of the following
proposition will allow to complete the proof.

\begin{proposition} \label{initial}
The following estimates hold.
\begin{equation} \label{eq:3.23}
\left\Vert \int_0^t S(t-s)\partial_x (uv) \right\Vert_X \le c  \Vert u   \Vert_Y  \Vert v  \Vert_{Y},
\end{equation}
\begin{equation} \label{eq:3.24}
\left\Vert \int_0^t S(t-s)\partial_x (uv) \right\Vert_X \le c
 \Vert u  \Vert_{X}  \Vert v  \Vert_{Y} .
\end{equation}
\end{proposition}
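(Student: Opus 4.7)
The plan is to reduce both estimates to the trilinear bounds \eqref{first}--\eqref{fourth} already established in the proof of Proposition \ref{pr:finbin}. Two ingredients make this possible: the embedding $U^1_{KP}\hookrightarrow V^2_{KP}$, which gives $Y\hookrightarrow l^q l^p V^2_{KP}$, and the high-modulation bound $\Vert u^{>\Lambda}\Vert_{L^1_t L^2_{xy}}\lesssim \Lambda^{-1}\Vert u\Vert_{U^1_{KP}}$ established in Section \ref{strichartz}.

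The $l^q l^p V^2_{KP}$ component of the $X$-norm of the Duhamel integral is immediate. Duality against $l^q l^{p'} V^2_{KP}$ reduces it to trilinear integrals already controlled by \eqref{first} and \eqref{third}, whose right-hand sides use only $V^2_{KP}$-norms on $u,v$. Since $Y\hookrightarrow l^q l^p V^2_{KP}$, replacing $X$ by $Y$ on one or both factors is free.

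For the $l^q \dot X^{0,b}$ component we use the standard duality identity
\[
\Big\Vert\int_0^t S(t-s) F \, ds\Big\Vert_{\dot X^{0,b}} \lesssim \sup_{\Vert w\Vert_{\dot X^{0,1-b}}\le 1} \Big|\int F \bar w \, dx\, dy\, dt\Big|
\]
with $F=\partial_x(uv)$, which again reduces matters to the trilinear integrals \eqref{first}--\eqref{fourth} with test function $w\in\dot X^{0,1-b}$. For \eqref{first}, \eqref{second}, and \eqref{third}, no $\dot X^{0,b}$ norm appears on $u,v$, so the substitution is free. The nontrivial case is the analogue of \eqref{fourth}, where the original proof placed the high-modulation piece on the low-frequency factor $u_\mu$ and invoked $\Vert u_\mu^{\ge \mu L^2}\Vert_{L^2_t L^3_{xy}}\lesssim (\mu L^2)^{1/4-b}\mu^{1/12}\Vert u_\mu\Vert_{\dot X^{0,b}}$. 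When $u\in Y$ this estimate is unavailable; in its place we use the sharper $L^1_t L^2_{xy}$ high-modulation bound supplied by $U^1_{KP}$ and pair it by H\"older in time against an $L^\infty_t L^2_{xy}$-control of $T_L(v_\lambda,w_\lambda)_\mu$, obtained from the bilinear Strichartz estimates of Theorem \ref{th:2.1} and the embedding $V^2_{KP}\hookrightarrow L^\infty_t L^2_{xy}$ applied after a further atomic decomposition of $w\in\dot X^{0,1-b}$. An equivalent and cleaner route is to decompose $u_\mu\in U^1_{KP}$ into linear atoms $S(t-t_i)\phi_i$ with $\sum\Vert\phi_i\Vert_{L^2}\lesssim\Vert u_\mu\Vert_{U^1_{KP}}$ and apply Proposition \ref{pro:3.1} to each atom, since linear solutions lie trivially in $V^2_{KP}$.

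Inequality \eqref{eq:3.24} is strictly easier: at most one of $u,v$ lies in $Y$ while the other retains its full $X$-control, so the $U^1_{KP}$ substitution is needed in at most one factor. The main obstacle I anticipate is the bookkeeping of $L$- and $\mu/\lambda$-powers in the modified analogue of \eqref{fourth}: the margin in the original argument was already tight, forcing $b>5/6$, and one must check that trading $\dot X^{0,b}$ for the $U^1_{KP}$-based high-modulation estimate does not erode the summability in $L$ and in dyadic $\mu$.
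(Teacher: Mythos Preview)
Your overall strategy is the paper's, but two points in the execution are wrong.

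First, you assert that the $l^q l^p V^2_{KP}$ component of the output is immediate because the relevant trilinear bounds ``use only $V^2_{KP}$-norms on $u,v$''. This is false for \eqref{third}: its right-hand side carries the term $\mu^{2-3b}\Vert u_\mu\Vert_{\dot X^{0,b}}$, which enters precisely in the sub-case where the high modulation lands on the low-frequency factor (the term $\int u_\mu^{\ge\mu L^2} T_L(v_1,w_1)\,dx\,dy\,dt$). Hence that sub-case must be redone for the $V^2_{KP}$ output just as for the $\dot X^{0,b}$ output; it is not only \eqref{fourth} that needs repair. (Incidentally, the $V^2_{KP}$ output is tested against $w\in l^{p'}V^2_{KP}$, so the relevant pair is \eqref{second}/\eqref{third}, not \eqref{first}/\eqref{third}.)

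Second, your proposed bound $\Vert T_L(v_\lambda,w_\lambda)_\mu\Vert_{L^\infty_t L^2_{xy}}$ via Theorem~\ref{th:2.1} does not work: that theorem gives a space--time $L^2$ bound on products of free solutions, not a fixed-time $L^2$ bound. What is actually needed is elementary. The paper pairs $L^1_t L^\infty_{xy}$ against $L^\infty_t L^1_{xy}$: Bernstein on the box $\Gamma_{\mu,l}$ (measure $\sim\mu^5$) upgrades the $U^1_{KP}$ high-modulation estimate $\Vert u^{>\mu L^2}_{\Gamma_{\mu,l}}\Vert_{L^1_t L^2}\lesssim(\mu L^2)^{-1}\Vert u_{\Gamma_{\mu,l}}\Vert_{U^1_{KP}}$ to $L^1_t L^\infty$, and the other factor is controlled by the trivial $\Vert vw\Vert_{L^1_{xy}}\le\Vert v\Vert_{L^2}\Vert w\Vert_{L^2}$ at each time. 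Your $L^1_t L^2\times L^\infty_t L^2$ pairing would also close if you apply Bernstein to $(T_L(v,w))_{\Gamma_{\mu,l}}$ instead, but Theorem~\ref{th:2.1} is the wrong tool. The atomic-decomposition alternative is circular: $U^1_{KP}$ atoms lie in $V^2_{KP}$, and Proposition~\ref{pro:3.1} already accepts $V^2_{KP}$ inputs, but $V^2_{KP}$ alone on $u_\mu$ was insufficient for this sub-case---that is exactly why \eqref{third} and \eqref{fourth} carry the extra $\dot X^{0,b}$ term.
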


 With these estimates at hand we
 complete the fixed point argument.  By Duhamel's formula, to solve \eqref{eq:1.1} on $[0,\infty) $ is equivalent  to solving
\[w=\tilde u+\int_0^t S(t-s)\partial_x(w^2)(s)ds.\]
 We rewrite this equation in terms of the difference  $u=w- \tilde u$ and define the map
\begin{equation}\label{eq:3.26}
\begin{split}
\Phi(u):= & \int_0^t S(t-s)\partial_x((u+\tilde{u})^2)(s)ds
\\ = & \int_0^t  S(t-s) \partial_x\tilde{u}^2 ds + \int_{0}^t S(t-s) \partial_x (2 \tilde u u +u^2)ds
\end{split}
\end{equation}
where we set $u(s)=0 $ for $s<0$.

Set $r:=\min(\frac{1}{4C},3\varepsilon)$. Here $C$ is the largest constant among the constants from  \eqref{finbin}, \eqref{eq:3.23} and \eqref{eq:3.24}. We define the closed ball of radius $r$ in $X$
\[ B_r:=\{u\in X ; \|u\|_{X}\leq r\}.\]
We search  an unique  fixed point of $\Phi$ in $B_r$.
By the definition of $Y$
\[ \|\tilde u  \|_{Y}\leq C  \|u \|_{l^ql^p L^2}\lesssim \varepsilon.\]
By  \eqref{finbin}, \eqref{eq:3.23} and \eqref{eq:3.24}, we have

\begin{equation}\label{eq:3.27}\|\Phi(u)\|_{X}\lesssim \|u\|_{X}^2+2\|\tilde u  \|_{Y}\|u\|_{X}+\|\tilde u  \|_{Y}^2\leq r
\end{equation}
and
\[
\begin{split}
\|\Phi(u)-\Phi(v)\|_{X} \le &   C\|u-v\|_{X}(\|u\|_{X}+\|v\|_{X} +\|\tilde{u}\|_{Y})
\\ \leq & \frac12\|u-v\|_{X}.
\end{split}\]
We apply the contraction mapping theorem to obtain existence of a unique fixed
point. The linearization at the fixed point is invertible - it is a contraction
by construction - and the map $\Phi$ is analytic. Hence the map from the
initial data to the fixed point is analytic.

The estimate
$$\|u\|_X\leq C\|u_0\|^2_{l^ql^pL^2}$$
follows from \eqref{eq:3.27}. This completes the proof, up to proving Proposition \ref{initial}.

\subsection{The proof of Proposition  \ref{initial}} \label{proof for initial data}

By the same strategy as above
 we continue to assume $\mu \le 1 \le \lambda$. The estimates \eqref{first}
and \eqref{second} are in terms of $l^pV^2_{KP}$ at frequency $\lambda$. It is
a consequence of Minkowski's inequality  that
\[ \Vert u_\lambda \Vert_{l^pV^2_{KP}} \lesssim \Vert u_\lambda \Vert_{l^pU^2_{KP}}\lesssim \Vert u_\lambda \Vert_{l^p U^1_{KP}} .\]
We can directly replace
$l^pV^2_{KP}$ by $l^pU^1_{KP}$ in the estimates \eqref{first} and \eqref{second}.
This completes the argument for the [(high,high)$\to $ low]  case, for both estimates \eqref{eq:3.23}  and \eqref{eq:3.24}.  The next  lemma provides the remaining [(low,high) $\to $ high] estimates.

\begin{lemma}\label{U1U1}  The following estimates hold, for $\mu\le 1$,
\begin{equation} \label{v2kpii2}
\begin{split}
\left\| \int_0^t S(t-s)  (u_\mu v_1 )_1ds \right\|_{l^p(U^2_{KP})}
& \\ & \hspace{-4cm} \le C \mu^{\min(\frac{2}{p}-1-\varepsilon,b-\frac12)}\Big( \mu^{1/2}    \Vert u_\mu \Vert_{l^pU^1_{KP}}\Big)
\Vert v_1 \Vert_{l^pU^1_{KP}},
\end{split}
\end{equation}
\begin{equation} \label{itfirstl22}
\left\|\int_0^t S(t-s)(u_\mu v_1 )_1 \right\|_{ \dot{X}^{0,b}}
\le C \mu^{\min(\frac2p-1-\varepsilon,b-\frac12)} \Big( \mu^{\frac12}   \Vert u_\mu \Vert_{l^pU^1_{KP}}\Big)
\Vert v_1 \Vert_{l^pV^2_{KP}}.
\end{equation}
\begin{equation} \label{itfirstl23}
\left\|\int_0^t S(t-s)(u_\mu v_1 )_1 \right\|_{ \dot{X}^{0,b}}
\le C \mu^{\min(\frac2p-1-\varepsilon,b-\frac12)} \Big( \mu^{\frac12}   \Vert u_\mu \Vert_{l^pV^2_{KP}}\Big)
\Vert v_1 \Vert_{l^pU^1_{KP}}.
\end{equation}
 \end{lemma}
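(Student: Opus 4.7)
The plan is to adapt the proof of Proposition \ref{pr:finbin} in the [(low, high) $\to$ high] regime; the decisive new ingredient is the sharp high-modulation estimate $\|f^{>\Lambda}\|_{L^1_tL^2_{xy}}\lesssim\Lambda^{-1}\|f\|_{U^1_{KP}}$, which is strictly stronger than the corresponding $V^2_{KP}$ or $\dot X^{0,b}$ bound and is what lets us improve on \eqref{fourth}. By scaling, reduce to $\lambda=1$, $\mu\le 1$. Dualize \eqref{v2kpii2} against $w_1\in l^{p'}V^2_{KP}$, and \eqref{itfirstl22}, \eqref{itfirstl23} against $w_1\in\dot X^{0,1-b}$ via $\|\int_0^t S(t-s)f\,ds\|_{\dot X^{0,b}}\lesssim\|f\|_{\dot X^{0,b-1}}$; this reduces all three estimates to trilinear forms $\int u_\mu v_1 w_1\,dx\,dy\,dt$. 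Decompose these as $\sum_L\int T_L(u_\mu,v_1)w_1$ with $L=2^k$, $k\ge 0$. The resonance identity \eqref{eq:2.6} implies that at least one factor carries modulation $\gtrsim\mu L^2$, splitting the analysis into three subcases.

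If the high-modulation factor is $w_1$, apply the bilinear estimate \eqref{eq:3.10} to $T_L(u_\mu,v_1)_1$ (downgrading any $l^pU^1_{KP}$ norm to $l^pV^2_{KP}$ via $U^1_{KP}\hookrightarrow V^2_{KP}$), combined with the standard high-modulation bound $\|w_1^{>\mu L^2}\|_{L^2}\le(\mu L^2)^{-1/2}\|w_1\|_{V^2_{KP}}$ or $\|w_1^{>\mu L^2}\|_{L^2}\le(\mu L^2)^{-(1-b)}\|w_1\|_{\dot X^{0,1-b}}$. Summing the resulting geometric series in $L$ (convergent for $1<p<2$ and $b>5/6$) produces the factors $\mu^{2/p-1-\varepsilon}$ and at least $\mu^{b-1/2}$, respectively; since $\mu\le 1$ both dominate $\mu^{\min(2/p-1-\varepsilon,\,b-1/2)}$. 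If the high-modulation factor is $v_1$, the symmetry identity of Lemma \ref{TL} swaps the roles of $v_1$ and $w_1$ and reduces to the previous case.

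The crucial subcase is when $u_\mu$ itself carries the high modulation and $u_\mu\in l^pU^1_{KP}$ (covering \eqref{v2kpii2} and \eqref{itfirstl22}; for \eqref{itfirstl23} we first use Lemma \ref{TL} to move the $l^pU^1_{KP}$ factor $v_1$ into this role). Here estimate
\[
\Big|\int u_\mu^{>\mu L^2}\,T_L(v_1,w_1)_\mu\,dx\,dy\,dt\Big|\le\|u_\mu^{>\mu L^2}\|_{L^1_tL^2_{xy}}\|T_L(v_1,w_1)_\mu\|_{L^\infty_tL^2_{xy}},
\]
the first factor being bounded by $(\mu L^2)^{-1}\|u_\mu\|_{U^1_{KP}}$. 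For the second factor, localize $v_1$ and $w_1$ to angular sectors $\Gamma_{1,k,L},\Gamma_{1,k',L}$ compatible with the support of $\rho_L$: the output has $\xi$-frequency of size $\sim\mu$ and $\eta$-frequency confined to a box of diameter $\lesssim L$, so Bernstein in $\xi$ (gain $\mu^{1/2}$), Bernstein in $\eta$ (gain $L$) and the embedding $V^2_{KP}\hookrightarrow L^\infty_tL^2$ give a bound of order $\mu^{1/2}L$ per compatible angular pair. When $w_1\in\dot X^{0,1-b}$, first split $w_1=w_1^{\le\mu L^2}+w_1^{>\mu L^2}$, using $\|w_1^{\le\mu L^2}\|_{V^2_{KP}}\lesssim(\mu L^2)^{b-1/2}\|w_1\|_{\dot X^{0,1-b}}$ for the low-modulation part and direct $L^2$--$L^2$ Cauchy--Schwarz combined with \eqref{eq:3.15} for the high-modulation part. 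Summing over angular indices in $l^p/l^{p'}$ fashion and then over $L$ yields the claimed factor $\mu^{b-1/2}$.

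The main obstacle will be the rigorous $L^\infty_tL^2_{xy}$ control of $T_L(v_1,w_1)_\mu$: combining the $\xi$- and $\eta$-Bernstein gains with the angular sector decomposition while keeping the $l^p$ and $l^{p'}$ summation over sector indices consistent requires careful bookkeeping of the same flavor as in Proposition \ref{pro:3.1}, and one must verify that the geometric series in $L$ closes under the standing assumptions $1<p<2$ and $5/6<b<1$.
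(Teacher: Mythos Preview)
Your overall architecture matches the paper exactly: dualize, expand in $T_L$, split into three high-modulation subcases, and observe that only the $u_\mu$-high subcase is genuinely new because the others are covered by \eqref{third}--\eqref{fourth} via the embedding $U^1_{KP}\hookrightarrow V^2_{KP}$.

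For the $u_\mu$-high subcase you take the Bernstein gain on the output $T_L(v_1,w_1)_\mu$ and pair it against $\|u_\mu^{>\mu L^2}\|_{L^1_tL^2_{xy}}$. The paper takes the dual route: it decomposes $u_\mu$ into the finest angular pieces $u_{\Gamma_{\mu,l}}$, applies Bernstein on each (the set $\Gamma_{\mu,l}$ has measure $\mu^5$, so $L^2_{xy}\to L^\infty_{xy}$ gains $\mu^{5/2}$), and pairs $\|u^{>\mu L^2}_{\Gamma_{\mu,l}}\|_{L^1_tL^\infty_{xy}}$ against the trivial bound $\|T_L(v_{\Gamma_{1,k,L}},w_{\Gamma_{1,k',L}})\|_{L^\infty_tL^1_{xy}}\le\|v\|_{L^\infty_tL^2}\|w\|_{L^\infty_tL^2}$. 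The two are H\"older-dual and yield identical numerology; the paper's version has the advantage that the $l^p$ summation over $l$ (with $(L/\mu)^2$ terms) is completely transparent, whereas your ``Bernstein on the output'' requires you to first localize $u_\mu$ to the matching box $\Gamma_{\mu,k,L/\mu}$ (of measure $\mu^3L^2$) before the $L^1_tL^2$--$L^\infty_tL^2$ pairing makes sense, and then the passage to $\|u_\mu\|_{l^pU^1_{KP}}$ still needs the H\"older step over the $(L/\mu)^2$ fine sectors. This is exactly the ``careful bookkeeping'' you flag.

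One genuine issue: your treatment of \eqref{itfirstl23} via Lemma~\ref{TL} does not work as stated. The symmetry $\int u_\mu^{>\mu L^2}T_L(v_1,w_1)=\int v_1\,T_L(u_\mu^{>\mu L^2},w_1)$ does not move the high modulation from $u_\mu$ onto $v_1$; the factor $u_\mu^{>\mu L^2}$ is still the one carrying the large modulation, so the $U^1$ high-modulation gain for $v_1$ is unavailable in this subcase. The paper does not try to exploit $v_1\in U^1_{KP}$ here at all: it simply declares \eqref{itfirstl23} ``a direct consequence of \eqref{fourth}''. That is the right move for the intended application to Proposition~\ref{initial}, because there the low-frequency factor $u_\mu$ lives in $X$ and hence also carries the $\dot X^{0,b}$ norm that the $u_\mu$-high subcase of \eqref{fourth} actually uses.
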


Together with the versions of \eqref{first} and \eqref{second} above these imply \eqref{eq:3.24} then \eqref{eq:3.23} in Proposition \ref{initial} by an easy summation.
\begin{proof}
Again we use  duality and decompose
\[
\left|\int u_\mu v_1 w_1 dx\,  dy\, dt\right| \le
\sum_L \left|\int u_\mu T_L( v_1, w_1)  dx\, dy\, dt\right|.
\]
At least one term has modulation $\geq \mu L^2$.  Notice that
\[\Vert u_\lambda\Vert_{l^p(V^2_{KP})}\lesssim \Vert u_\lambda\Vert_{l^p U^1_{KP}},\]the estimates in \eqref{third} and \eqref{fourth} work well except
 the case $u_{\mu}$ has the high modulation

\[
 \int u_\mu^{>\mu L^2}  T_L( v_1 w_1)  dx\,  dy\,  dt.
\]
Let  $L\ge 1$ and consider
\[ \int u^{>\mu L^2}_{\Gamma_{\mu,k,L/\mu}}   T_L (v_{\Gamma_{1,k',L}} ,u_{\Gamma_{1,k,L}}) dx\,  dy\,  dt \]
with $16 \le |k-k'|\le 1000$ if $L>1$, resp $|k-k'|\le 200$ if $L=1$.
we decompose $u_{\Gamma_{\mu,k,\frac{L}{\mu}}}$ further
\[
\begin{split}
\sum_{16\leq|k-k'|\leq100} \sum_{|k-l|\lesssim\frac{L}{\mu}}\left|\int u^{>\mu L^2}_{\Gamma_{\mu, l}}   T_L  (v_{\Gamma_{1,k,L}} , w_{\Gamma_{1,k,L}})   dx\, dy\, dt \right|
\hspace{-7.5cm} & \\ \lesssim &\sum_{16\leq|k-k'|\leq100}\sum_{|k-l|\lesssim\frac{L}{\mu}}
 \Vert u^{>\mu L^2}_{\Gamma_{\mu, l}}    \Vert_{L^1_tL^\infty}
\Vert  T_L (v_{\Gamma_{1,k',L}} , w_{\Gamma_{1,k,L}})  \Vert_{L^\infty_t L^1}
\\ \lesssim &  \sup_{k} \mu^{\frac52}\Big(\frac{L}{\mu}\Big)^{\frac{2}{p^\prime}}
\Vert u_{\Gamma_{\mu,k,\frac{L}{\mu}}} \Vert_{l^p(L^1_tL^2)}
\Vert v_1 \Vert_{l^p L^\infty_tL^2}  L^{\frac{2}{p}-1}\Vert w_1 \Vert_{l^{p'}L^\infty_tL^2}
\\ \lesssim &  \mu^{\frac2p-1}L^{-1}
\Big(\mu^{\frac12}\Vert u_\mu \Vert_{l^p U^1_{KP}}\Big)
\Vert v_1 \Vert_{l^p V^2_{KP}}  \Vert w_1 \Vert_{l^{p'}V^2_{KP}}.
\end{split}
\]
Here we used the size of the set $\Gamma_{\mu, l}$ is $\mu^5$.  We estimate similarly to above
\[
\begin{split}
 \sum_{16\leq|k-k'|\leq100}\left|\int u^{>\mu L^2}_{\Gamma_{\mu, k,L/ \mu}}   T_L  (v_{\Gamma_{1,k,L}} , w_{\Gamma_{1,k,L}})   dx\, dy\, dt \right|
\hspace{-5.5cm} & \\
\lesssim &  \mu^{b}L^{2b-2}
\Vert u_\mu \Vert_{l^2U^1_{KP}}
\Vert v_1 \Vert_{l^2 V^2_{KP}}  \Vert w_1 \Vert_{\dot{X}^{0,1-b}}.
\end{split}
\]
 Here we applied
Sobolev's resp. Bernstein's inequality in sets of Fourier size
 $\mu^3L^2$
and the  high modulation factor $ \mu L^2$. The summation with
respect to  $L$ is trivial since the exponent is negative.  Finally \eqref{itfirstl23}  is a direct consequence of \eqref{fourth}.
\end{proof}

\section{Ill-posedness and Function spaces}\label{ill}

\subsection{ Ill-posedness in $l^ql^pL^2$ for $p>2$. }

We prove illposedness (Theorem \ref{ill}) by contradiction. By scaling it suffices
to consider $T=1$. Suppose that the flow map $u_0 \to u(1)$ defines
a map from $l^ql^pL^2$ to itself which is continuously differentiable near $0$,
and twice differentiable at $0$,  for some $p>2$. For simplicity we choose
$q=\infty$, but the proof works for all $q\in [1,\infty] $.

 Consider the Cauchy problem
\begin{align}\label{eq:6.1}
\left\{
\begin{aligned}
&\partial_x\left(\partial_t u+\partial_x^3 u+\partial_x(u^2)\right)+\triangle_{y }u=0 \\
&u(0,x,y )=\gamma \phi(x,y )\  \  \gamma\in\mathbb R.
\end{aligned}
\right.
\end{align}
where $\phi\in l^\infty l^pL^2$ and $1<p<\infty$. Suppose that $u(\gamma,t,x,y )$ solves (\ref{eq:6.1}). By Duhamel's formula, we have
$$u(\gamma,t,x,y )=\gamma S(t)\phi(x,y )+\int_0^t S(t-s)\partial_x(u(\gamma,x,y )^2)(s)ds. $$
Since the flow map is (twice) differentiable at $u_0=0$
$$\frac{\partial u}{\partial \gamma}(0,t,x,y )=S(t)\phi(x,y ):=u_1(t,x,y ),$$
$$\frac{\partial^2u}{\partial\gamma^2}(0,t,x,y )=-2\int_0^t S(t-s)\partial_x(u_1^2(s))ds:=u_2(t,x,y ).$$
Since we assume the flow map to be twice differentiable
\begin{equation}\label{eq:6.2}
\|u_{2}(1,\cdot)\|_{l^\infty l^p L^2}\lesssim \|\phi\|^2_{l^\infty l^p L^2}.
\end{equation}
We construct a sequence of initial data for $u_1$ of norm $1$ so that
the norm of $u_2(1)$ tends to infinity. This yields the desired contradiction.

We define  the initial data $\phi$ defined by its Fourier transform
\[\begin{split}\hat{\phi}(\xi,\eta)=&\frac{1}{\mu^3\big(\frac{\lambda}{\mu}\big)^\frac2p}\chi_{[\frac{\mu}{2},\mu]}(\xi)\chi_{[\frac{\lambda\mu}{2},2\lambda\mu]^2}(\eta)+\frac{1}{\mu^{\frac32}\lambda^\frac32}\chi_{[\lambda+\frac{\mu}{2},\lambda+\mu]}(\xi)\chi_{[\frac{\lambda\mu}{2},2\lambda\mu]^2}(\eta)\\
:=&\hat{\phi}_1+\hat{\phi}_2.\end{split}\]
Here dyadic numbers $\mu\ll1\ll \lambda$ will be chosen later.  It is easy to check that
$$\|\phi\|_{l^\infty l^p L^2}\approx \|\phi_1\|_{l^\infty l^p L^2}\approx\|\phi_2\|_{l^\infty l^p L^2}\approx 1.$$
Moreover
$$u_1=S(t)\phi_1+S(t)\phi_2,$$
$$u_1^2=(S(t)\phi_1)^2+(S(t)\phi_2)^2+2(S(t)\phi_1S(t)\phi_2):=f_1+f_2+f_3.$$
The   Fourier transforms of the three summand are supported on pairwise disjoint sets and  they are orthogonal. We then decompose $u_2$ into three orthogonal parts as
$$u_2(1)=\int_0^1 S(1-s)(f_1+f_2+f_3)(s)ds:=F_1+F_2+F_3.$$
By (\ref{eq:6.2}), we have \begin{equation}\label{eq:6.3}\|F_3(1)\|_{l^\infty l^p L^2}\lesssim \|u_2(1,\cdot)\|_{l^\infty l^p L^2}\lesssim1.\end{equation}
By Lemma 4 in Page 376 of \cite{MoSaTz}, we have
\[
\hat{F}_3(1,\xi,\eta)=2\frac{\xi
  e^{i\omega(\xi,\eta)}}{\mu^3\big(\frac{\lambda}{\mu}\big)^{\frac2p}\mu^\frac{3}{2}\lambda^\frac32} \int_A  \frac{e^{iR(\xi,\xi_1,\eta,\eta_1)}-1}{R(\xi,\xi_1,\eta,\eta_1)}d\xi_1
d\eta_1.
\]
Here $R(\xi,\xi_1,\eta,\eta_1)$  denotes the resonance function
\begin{equation}
-3\xi\xi_1(\xi-\xi_1)-\frac{\xi\xi_1}{\xi-\xi_1}\Big|\frac{\eta}{\xi}
-\frac{\eta_1}{\xi_1}\Big|^2.
\end{equation}
In the set
\[
\begin{split}
A= & \Big\{ \xi_1,\eta_1: \xi_1\in[\frac{\mu}
    {2},\mu],\eta_1\in[\frac{\lambda\mu}{2},2\lambda\mu]^2,\\ &
  \xi-\xi_1\in[\lambda+\frac{\mu}{2},\lambda+\mu],\eta-\eta_1\in[\frac{\lambda\mu}{2},2\lambda\mu]^2\Big\}
\end{split}   \]
the resonance function is bounded from below:
$$|R(\xi,\xi_1,\eta,\eta_1)|\sim \lambda^2\mu.$$
If   $\mu\lambda^2=O(1)$ (we may choose $\mu$ and $\lambda$) and obtain
$$\frac{e^{iR(\xi,\xi_1,\eta,\eta_1)}-1}{R(\xi,\xi_1,\eta,\eta_1)}=1+O(1).$$
 It follows that
$$|\hat{F}_3(1,\xi,\eta)|\geq\frac{\lambda\lambda^2\mu^3}{\mu^3\big(\frac{\lambda}{\mu}\big)^{\frac2p}\mu^{\frac32}\lambda^{\frac32}}\chi_{[\lambda+\mu,\lambda+\frac{3\mu}{2}]}(\xi)\chi_{[\lambda\mu,2\lambda\mu]^2}(\eta).$$
Then
\begin{equation}\label{eq:6.4}1\gtrsim\|F_3\|_{l^\infty l^p L^2}\gtrsim\frac{\lambda^3\mu^3}{\mu^3\big(\frac{\lambda}{\mu}\big)^\frac{2}{p}}\approx\frac{\lambda^3}{\lambda^{\frac{6}{p}}}.\end{equation}
Here we used $\mu\lambda^2=O(1)$.   Since $\lambda\gg1$ we arrive at a contradiction to \eqref{eq:6.4}  unless $p\leq 2$.

\subsection{The function spaces $l^ql^pL^2$}

We prove Theorem \ref{discription}.  By the embedding $l^ql^pL^2 \subset l^{\tilde q} l^{\tilde p} L^2$ if
$\tilde q \ge q$ and $\tilde p \ge p$ it suffices to prove endpoint statements.

(i)
Let $f$ be a Schwartz function  and fix $\lambda$. Trivially
\[
\Big(\sum_{l\in \lz}\|f_{\Gamma_{\lambda,l}}\|_{L^2}^p\Big)^{\frac1p}= \Big(\sum_{M\geq \lambda^2}\sum_{|l|\sim\frac{M}{\lambda}}\|f_{\Gamma_{\lambda,l}}\|_{L^2}^p\Big)^{\frac1p}
\]
and for $\frac43 <p$ and $N>2$, we have
$$\|f_{\Gamma_{\lambda,l}}\|_{L^2}\lesssim_{N} \frac{\lambda^{\frac32}}{(1+\lambda+M)^{N}},$$
thus
\begin{equation} \label{schwartzdyadic}
\Big(\sum_{l\in \lz}\|f_{\lambda,\Gamma_{l,\lambda}}\|_{L^2}^p\Big)^{\frac1p}\lesssim_N\frac{\lambda^{\frac32-\frac2p}}{(1+\lambda)^N} \end{equation}
and for $p>2$
\[\sum_{\lambda}\lambda^{-\frac12}\|f_{\lambda}\|_{l^p(L^2)}<\infty.\]
By duality $l^\infty l^pL^2 (p<2)$  embeds into the space of  distributions.
A small modification shows that $l^1l^2L^2$ embedds into the space of distributions.

(ii) It suffices to   construct a sequence of Schwartz functions
which converges in $l^pl^2L^2$ ($p>1$) but diverges as distributions. Since
\[ l^pl^2L^2 = L^2(\R^2; \dot B^{\frac12}_{2,p} ) \]
it suffices to construct a sequence of functions $\phi_\mu$ of one variable of norm $1$ in  $\dot B^{\frac12}_{2,q}$ and a Schwartz function $\phi$ so that
$\int \phi_\mu \phi dx \to \infty$.
 Here $\dot B^{\frac12}_{2,q}$
denotes the homogeneous Besov space.
This is well known but we  give an example for completeness.
For $0<\lambda$ we choose  a
Schwartz function $f_\lambda$ with the property
\[\hat{f}_{\lambda}(\xi)=\left\{\begin{array}{ll}\lambda^{-1},&\text{for } |\xi|\sim \lambda,\\
0,&\text{\, otherwise.}\end{array}\right.\]
For any fixed $\mu\ll 1$, we define
\[\phi_\mu=\frac{1}{|\ln\mu|^{\frac1p}}\sum_{\mu^2\leq\lambda\leq\mu}f_\lambda.\]
 It is easy to see
$$\|\phi_{\mu}\|_{\dot B^{\frac12}_{2q}} \sim 1.  $$
However if   $\psi$ is a Schwartz function with Fourier transform supported
in the ball $B(0,2)$ and $\hat{\psi}=1$ in the unit ball $B(0,1)$ then
\[<\psi,\phi_\mu>\sim\sum_{\mu^2\leq\lambda\leq \mu}|\ln\lambda|^{-\frac1p}  \sim |\ln \mu|^{1-\frac1p} .\]

(iii) Suppose now that the   Schwarz function $\phi$ is in $l^\infty l^pL^2$ for $p<\frac43$.
 We assume there exists $(0,\eta_0)\in \mathbb R^3$ such that $\hat{\phi}(0,\eta_0)\neq0$. By continuity, there exists $r,c>0$ such that
\[
|\hat{\phi}(\xi,\eta)|>c,\,\,\text{for}\,\,(\xi,\eta)\in B:= B((0,\eta_0),r).
\]
 Then
\[
\begin{split}
\sup_{\lambda}\lambda^\frac12\Big(\sum_{ l\in\lz}\|\phi_{\Gamma_{\lambda, l}}\|_{L^2}^p\Big)^{\frac1p}&\ge \sup_{\lambda\lesssim r} \lambda^\frac12\Big(\sum_l\|\phi_{\Gamma_{\lambda,l}\cap B}\|_{L^2}^p\Big)^{\frac1p}\\
&\sim\sup_{\lambda\lesssim r}r^{\frac2p}\lambda^{1-\frac{2}{p}}\|\phi_{\lambda\cap B}\|_{L^2}\sim \sup_{\lambda\lesssim r} cr^{\frac2p+1}\lambda^{\frac32-\frac2p}.
\end{split}
\]
which is $\infty$ if  $1<p<\frac43$.  This is a contradiction and hence
\[ 0=\hat \phi(0,\eta)= (2\pi)^{-\frac32} \int e^{-iy\eta} \phi(x,y) dx dy    \]
 for all $\eta \in \R^2$. The conclusion for $l^q l^{\frac43}L^2$ follows in the same fashion.

(iv)  It follows from \eqref{schwartzdyadic}
that  Schwartz functions are contained in $l^\infty l^p L^2$ if
 $\frac43\leq p $ and in $l^ql^pL^2$ if $\frac43<p$
 and  $1\leq q<\infty$.

\bibliographystyle{plain}


\begin{thebibliography}{10}

\bibitem{Bourgain3}
Jean Bourgain.
\newblock On the {C}auchy problem for the {K}adomtsev-{P}etviashvili equation.
\newblock {\em Geom. Funct. Anal.}, 3(4):315--341, 1993.

\bibitem{Hadac2}
Martin Hadac.
\newblock {\em On the local well-posedness of the Kadomtsev-Petviashvili II
  equation.}
\newblock PhD thesis, Universit\"at Dortmund, 2007.

\bibitem{Hadac1}
Martin Hadac.
\newblock Well-posedness for the {K}adomtsev-{P}etviashvili {II} equation and
  generalisations.
\newblock {\em Trans. Amer. Math. Soc.}, 360(12):6555--6572, 2008.

\bibitem{HaHeKo}
Martin Hadac, Sebastian Herr, and Herbert Koch.
\newblock Well-posedness and scattering for the {KP}-{II} equation in a
  critical space.
\newblock {\em Ann. Inst. H. Poincar{\'e} Anal. Non Lin{\'e}aire},
  26(3):917--941, 2009.

\bibitem{HaHeKo1}
Martin Hadac, Sebastian Herr, and Herbert Koch.
\newblock Erratum to ``{W}ell-posedness and scattering for the {KP}-{II}
  equation in a critical space'' [{A}nn. {I}. {H}. {P}oincar{\'e}---{AN} 26 (3)
  (2009) 917--941].
\newblock {\em Ann. Inst. H. Poincar{\'e} Anal. Non Lin{\'e}aire},
  27(3):971--972, 2010.

\bibitem{IsLoMe}
Pedro Isaza, Juan L{{\'o}}pez, and JorgeJorge Mej{\'{\i}}a.
\newblock The {C}auchy problem for the {K}adomtsev-{P}etviashvili ({KPII})
  equation in three space dimensions.
\newblock {\em Comm. Partial Differential Equations}, 32(4-6):611--641, 2007.

\bibitem{IsMe}
Pedro Isaza and Jorge Mej{\'{\i}}a.
\newblock Local and global {C}auchy problems for the {K}adomtsev-{P}etviashvili
  ({KP}-{II}) equation in {S}obolev spaces of negative indices.
\newblock {\em Comm. Partial Differential Equations}, 26(5-6):1027--1054, 2001.

\bibitem{KeelTao}
Markus Keel and Terence Tao.
\newblock Endpoint {S}trichartz estimates.
\newblock {\em Amer. J. Math.}, 120(5):955--980, 1998.

\bibitem{KleSa}
Christian Klein and Jean-Claude Saut.
\newblock Numerical study of blow up and stability of solutions of generalized
  {K}adomtsev-{P}etviashvili equations.
\newblock {\em J. Nonlinear Sci.}, 22(5):763--811, 2012.

\bibitem{KoTa}
Herbert Koch and Daniel Tataru.
\newblock Dispersive estimates for principally normal pseudodifferential
  operators.
\newblock {\em Comm. Pure Appl. Math.}, 58(2):217--284, 2005.

\bibitem{KoBi}
Herbert Koch, Daniel Tataru, and Monica Vi{\c{s}}an.
\newblock Dispersive equations and nonlinear waves.
\newblock 2014.

\bibitem{MoSaTz}
Luc Molinet, Jean-Claude Saut, and Nickolay Tzvetkov.
\newblock Well-posedness and ill-posedness results for the
  {K}adomtsev-{P}etviashvili-{I} equation.
\newblock {\em Duke Math. J.}, 115(2):353--384, 2002.

\bibitem{Takaoka}
Hideo Takaoka.
\newblock Well-posedness for the {K}adomtsev-{P}etviashvili {II} equation.
\newblock {\em Adv. Differential Equations}, 5(10-12):1421--1443, 2000.

\bibitem{TkTz}
Hideo Takaoka and Nickolay Tzvetkov.
\newblock On the local regularity of the {K}adomtsev-{P}etviashvili-{II}
  equation.
\newblock {\em Internat. Math. Res. Notices}, (2):77--114, 2001.

\bibitem{Tzvetkov}
Nickolay Tzvetkov.
\newblock On the {C}auchy problem for {K}adomtsev-{P}etviashvili equation.
\newblock {\em Comm. Partial Differential Equations}, 24(7-8):1367--1397, 1999.

\bibitem{Wiener}
Norbert {Wiener}.
\newblock The quadratic variation of a function and its fourier coefficients.
\newblock In Pesi~Rustom Masani, editor, {\em Collected works with
  commentaries. Volume II: Gerneralized harmonic analysis and Tauberian theory;
  classical harmonic and complex analysis}, volume~15 of {\em Mathematicians of
  our Time}, chapter XIII, page 969. Cambridge, Mass, London: The MIT Press.,
  1979.

\end{thebibliography}

\end{document}